\newtheorem{thm}{Theorem}[section]
\newtheorem{prop}[thm]{Proposition}
\newtheorem{cor}[thm]{Corollary}
\newtheorem{lem}[thm]{Lemma}
\theoremstyle{definition}
\newtheorem{rem}[thm]{Remark}
\newtheorem{defn}[thm]{Definition}
\def\vint{\mathop{\mathchoice%
          {\setbox0\hbox{$\displaystyle\intop$}\kern 0.22\wd0%
           \vcenter{\hrule width 0.6\wd0}\kern -0.82\wd0}%
          {\setbox0\hbox{$\textstyle\intop$}\kern 0.2\wd0%
           \vcenter{\hrule width 0.6\wd0}\kern -0.8\wd0}%
          {\setbox0\hbox{$\scriptstyle\intop$}\kern 0.2\wd0%
           \vcenter{\hrule width 0.6\wd0}\kern -0.8\wd0}%
          {\setbox0\hbox{$\scriptscriptstyle\intop$}\kern 0.2\wd0%
           \vcenter{\hrule width 0.6\wd0}\kern -0.8\wd0}}%
          \mathopen{}\int}
\DeclareMathOperator{\diam}{diam}
\DeclareMathOperator*{\esssup}{ess\, sup}
\DeclareMathOperator{\dima}{dim_A}
\DeclareMathOperator{\udima}{\overline{dim}_A}
\DeclareMathOperator{\dimh}{dim_H}
\DeclareMathOperator{\lcodima}{\underline{co\,dim}_A}
\DeclareMathOperator{\codimh}{co\,dim_H}
\newcommand{\dist}{\mathit d}
      \newcommand{\N}{{\mathbb N}}
\newcommand{\Z}{{\mathbb Z}}      \newcommand{\R}{{\mathbb R}}
\def\dist{\qopname\relax o{dist}}
\newcommand{\Ha}{{\mathcal H}}
\newcommand{\sub}{\subset}
\newcommand{\rad}{\operatorname{rad}}
\newcommand{\Lip}{\operatorname{Lip}}
\begin{document}

\title[$A_p$-properties of distance functions]
{Muckenhoupt $A_p$-properties of distance functions and\\
applications to Hardy--Sobolev -type inequalities}

\author[B. Dyda]{Bart{\l}omiej Dyda}   %
\address[B.D.]{Faculty of Pure and Applied Mathematics, Wroc{\l}aw University of Science and Technology, ul. Wybrze\.{z}e Wyspia\'{n}skiego 27, 50-370 Wroc{\l}aw, Poland}
\email{bartlomiej.dyda@pwr.edu.pl}

\author[L. Ihnatsyeva]{Lizaveta Ihnatsyeva}   
\address[L.I.]{Department of Mathematics, Kansas State University, Manhattan, KS 66506, USA}
\email{ihnatsyeva@math.ksu.edu}

\author[J. Lehrb\"ack]{Juha Lehrb\"ack}   
\address[J.L.]{Department of Mathematics and Statistics, P.O. Box 35, FI-40014 University of Jyvaskyla, Finland}
\email{juha.lehrback@jyu.fi}

\author[H. Tuominen]{Heli Tuominen}   
\address[H.T.]{Department of Mathematics and Statistics, P.O. Box 35, FI-40014 University of Jyvaskyla, Finland} 
\email{heli.m.tuominen@jyu.fi}

\author[A. V. V\"ah\"akangas]{Antti V. V\"ah\"akangas}
\address[A.V.V.]{Department of Mathematics and Statistics, P.O. Box 35, FI-40014 University of Jyvaskyla, Finland}
 \email{antti.vahakangas@iki.fi}

\keywords{Muckenhoupt weight, Assouad dimension, metric space, Hardy--Sobolev inequality}
\subjclass[2010]{42B25 (31E05, 35A23)}

\begin{abstract}
Let $X$ be a metric space equipped with a doubling measure. We consider weights $w(x)=\dist(x,E)^{-\alpha}$, where $E$ is a closed set in $X$ and $\alpha\in\R$. We establish sharp conditions, based on the Assouad (co)dimension of $E$, for the inclusion of $w$ in Muckenhoupt's $A_p$ classes of weights, $1\le p<\infty$. With the help of general $A_p$-weighted embedding results, we then prove (global) Hardy--Sobolev inequalities and also fractional versions of such inequalities in the setting of metric spaces.
\end{abstract}

\date{\today}
\maketitle

\section{Introduction}

Muckenhoupt's $A_p$-weights are important tools in mathematical analysis. Their characterizing property in $\R^n$ is that the Hardy--Littlewood maximal operator is bounded in the weighted space $L^p(w\,dx)$, for $1< p <\infty$, if and only if the weight $w$ is an $A_p$-weight; see e.g.~\cite{garcia-cuerva}. Muckenhoupt's $A_p$-weights are also examples of {\em $p$-admissible} weights in the sense of~\cite{HKM}, and hence $A_p$-weighted Euclidean spaces satisfy the basic assumptions that are often used in the theory of analysis on metric spaces: the doubling property and a $p$-Poincar\'e inequality. Due to the importance of $A_p$-weighted function spaces, various norm inequalities have been established for $A_p$-weights both in Euclidean spaces and in more general settings; see, for instance~\cite{garcia-cuerva,MW,MR1052009,MR1962949}.

On the other hand, Hardy and Hardy--Sobolev -type inequalities are important examples of inequalities that in particular yield embeddings between weighted function spaces. When $X$ is a metric space, the weights in these inequalities are of the type $\delta_E^{-\alpha}$, $\alpha\in\R$, where $\delta_E(x)=\dist(x,E)$ denotes the distance from a point $x\in X$ to a closed set $E\subset X$. We refer to~\cite{LV} and~\cite{LHA} for recent results related to Hardy--Sobolev -inequalities in $\R^n$ and Hardy inequalities in metric spaces, respectively, and to~\cite{MR3148524,MR3343059} and~\cite{MR3237044} for fractional counterparts of such inequalities, respectively.

Now, two natural questions arise: 
\begin{itemize}
\item[(i)] When does such a weight $\delta_E^{-\alpha}$ belong to (some) Muckenhoupt $A_p$-class? 
\item[(ii)] Can the general theory of $A_p$-weighted inequalities be used to derive certain Hardy and Hardy--Sobolev -inequalities?
\end{itemize}
As far as we know, neither of these questions has been given a complete or comprehensive answer, although there are several partial results concerning question (i), mainly in $\R^n$ but also in more general metric spaces; we will comment on some of these more precisely in Section~\ref{s.distance}.
On the other hand, some of the results of Horiuchi~\cite{MR1021144,MR1118940} are closely related to question~(ii) in $\R^n$.

In this paper, we provide a characterizing answer to question (i) with high generality. In an Ahlfors $Q$-regular metric space $X$ (and hence in particular in $\R^n$ with $Q=n$) our result reads as follows:

\begin{thm}\label{thm.char_intro}
Assume that $X$ is a $Q$-regular metric space.
Let $\emptyset\neq E\subset X$ be a closed set with $\dima(E)<Q$ and 
let $\alpha\in\R$ and $w=\delta_E^{-\alpha}$. 
Then 
\begin{itemize}
\item[(A)] $w\in A_p$, for $1<p<\infty$, if and only if $(1-p)(Q-\dima(E)) < \alpha < Q-\dima(E)$\,.
\item[(B)] $w\in A_1$ if and only if $0 \le \alpha < Q-\dima(E)$\,.
\end{itemize}
\end{thm}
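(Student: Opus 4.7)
My plan is to reduce both parts to a single sharp integral bound and then exploit the two-sided nature of the Assouad dimension of $E$. The key estimate I would establish first is that, for $x\in E$, $0<R\le\diam(X)$, and any $\beta<Q-\dima(E)$,
\[
\int_{B(x,R)}\delta_E^{-\beta}\,d\mu \asymp R^{Q-\beta},
\]
with implicit constants depending only on $Q$, $\beta$, and $\dima(E)$. To prove the upper bound I would decompose $B(x,R)$ into dyadic annuli $A_k=\{y\in B(x,R):2^{-k-1}R\le\delta_E(y)<2^{-k}R\}$. Choosing $s$ with $\dima(E)<s<Q-\beta$, the Assouad covering property gives $\lesssim 2^{ks}$ balls of radius $2^{-k}R$ whose (comparable) enlargements cover $A_k$, so by $Q$-regularity $\mu(A_k)\lesssim R^Q 2^{-k(Q-s)}$. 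Summing $\int_{A_k}\delta_E^{-\beta}\,d\mu\lesssim R^{Q-\beta}2^{-k(Q-s-\beta)}$ gives the bound because $Q-s-\beta>0$. The matching lower bound comes from $B(x,R/2)\subset B(x,R)$, on which $\delta_E\le R/2$ and $\mu\gtrsim R^Q$; when $\beta\le 0$ both bounds are immediate from $Q$-regularity.

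For the sufficiency of (A), I split balls $B=B(x_B,R)$ into two cases. If $\dist(x_B,E)\ge 4R$ then $\delta_E\asymp\dist(x_B,E)$ on $B$, and the $A_p$ quantity is uniformly bounded by $Q$-regularity alone. Otherwise $B\subset B(e,5R)$ for some $e\in E$; I apply the key estimate to $B(e,5R)$ with exponents $\beta=\alpha$ (legal because $\alpha<Q-\dima(E)$) and $\beta=-\alpha/(p-1)$ (legal because $(1-p)(Q-\dima(E))<\alpha$ rearranges to $-\alpha/(p-1)<Q-\dima(E)$). The resulting $A_p$ ratio is
\[
\Big(\vint_B \delta_E^{-\alpha}\,d\mu\Big)\Big(\vint_B \delta_E^{\alpha/(p-1)}\,d\mu\Big)^{p-1}\lesssim R^{-\alpha}\cdot R^{\alpha}=1.
\]
Part (B) is analogous: for $0\le\alpha<Q-\dima(E)$ the upper estimate yields $\vint_B\delta_E^{-\alpha}\,d\mu\lesssim R^{-\alpha}$, which is matched by $\essinf_B\delta_E^{-\alpha}\gtrsim R^{-\alpha}$ on balls meeting $E$.

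For necessity I would invoke the lower form of the Assouad dimension: for every $s<\dima(E)$ there exist sequences $x_i\in E$ and $0<r_i\le R_i$ such that covering $E\cap B(x_i,R_i)$ requires $\gtrsim(R_i/r_i)^s$ balls of radius $r_i$. Combined with $Q$-regularity, this produces subsets of $B(x_i,2R_i)$ of measure $\gtrsim R_i^Q(R_i/r_i)^{s-Q}$ on which $\delta_E\lesssim r_i$. Testing the $A_p$ ratio on $B(x_i,2R_i)$ and sending $r_i/R_i\to 0$ makes the $\delta_E^{-\alpha}$-factor diverge whenever $\alpha>Q-s$, and letting $s\nearrow \dima(E)$ rules out every $\alpha>Q-\dima(E)$; the conjugate argument with $w^{-1/(p-1)}$ rules out $\alpha<(1-p)(Q-\dima(E))$. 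The case $\alpha<0$ in (B) fails trivially since $\essinf_B w=0$ on any ball meeting $E$.

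The step I expect to be hardest is the endpoint analysis at $\alpha=Q-\dima(E)$ and $\alpha=(1-p)(Q-\dima(E))$, because the divergence scheme above only delivers a blow-up strictly past these thresholds. Here one has to replace the power-type divergence by a logarithmic one, obtained by summing the near-extremal covers over many scales simultaneously, and then chain together the two ingredients to force the averaged $A_p$ quantity to be unbounded. Once this endpoint argument is in place, the full characterization in (A) and (B) reduces to the bookkeeping of exponents indicated above.
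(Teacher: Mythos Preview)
Your sufficiency argument is essentially the paper's: the integral bound $\int_{B(x,R)}\delta_E^{-\beta}\,d\mu\lesssim R^{Q-\beta}$ for $\beta<Q-\dima(E)$ is exactly the Aikawa condition (Remark~\ref{rmk:LT}), and the near/far case split is the same as in Theorem~\ref{t.a_infty}.

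Your necessity argument for the \emph{strict} inequalities is a genuinely different route. The paper does not test the $A_p$ ratio against Assouad covers at all; instead it uses porosity of $E$ (available since $\dima(E)<Q$) to find a sub-ball $B'\subset B$ far from $E$, which immediately gives $\bigl(\vint_B w^{-1/(q-1)}\bigr)^{q-1}\gtrsim r^{\alpha}$ and hence the Aikawa estimate $\int_B\delta_E^{-\alpha}\,d\mu\lesssim r^{-\alpha}\mu(B)$. Your approach extracts more directly from the definition of $\dima$, while the paper's is shorter and uses only porosity.

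The genuine gap is the endpoint, and your proposed logarithmic fix will not work in general. The Assouad dimension only says that for each $s<\dima(E)$ and each constant $C$ there \emph{exist} $x,r,R$ with covering number exceeding $C(R/r)^s$; it gives no control on how many scales are near-extremal, nor any uniformity in $x$. There are sets for which the extremal behaviour occurs along an arbitrarily sparse sequence of scale ratios, and then ``summing the near-extremal covers over many scales simultaneously'' simply cannot produce a divergent logarithm. In other words, the information encoded in $\dima(E)$ alone is too coarse to rule out $\alpha=Q-\dima(E)$ by direct testing.

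The paper closes this gap with self-improvement. Once $w=\delta_E^{-\alpha}\in A_q$ yields the Aikawa condition at exponent $\alpha$, Lemma~\ref{lemma:aikawa si} (a reverse-H\"older/Gehring argument) upgrades it to exponent $\alpha+\delta$ for some $\delta>0$, whence $\lcodima(E)\ge\alpha+\delta>\alpha$, i.e.\ $\alpha<Q-\dima(E)$ strictly. Equivalently, you could invoke the openness of $A_p$ classes ($w\in A_p\Rightarrow w^{1+\varepsilon}\in A_p$) and then apply your own strict-inequality argument to $\alpha(1+\varepsilon)$; but either way some form of Gehring self-improvement is the missing ingredient, not a scale-summation trick.
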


Here $\dima(E)$ is the (upper) Assouad dimension of the set $E$. In the more general setting of a non-Ahlfors-regular space $X$ we obtain in Corollary~\ref{c.porous} a corresponding result in terms of the (lower) Assouad codimension. In particular, these characterizations show that the Assoaud dimension and codimension are certainly the correct notions of dimension to consider in this context. The definitions of these dimensions and other relevant concepts, as well as other preliminaries such as our assumptions on the metric space $X$, will be reviewed in Section~\ref{s.metric}.  Section~\ref{s.distance} then contains all of our results related to the $A_p$-properties of the distance functions.   

Concerning question (ii), recall that in $\R^n$ the global Hardy--Sobolev inequality, for exponents $1\le p\le q\le np/(n-p)<\infty$ and $\beta\in\R$ and with respect to a closed set $E\subset\R^n$, reads as
\begin{equation}\label{e.eucl.weighted.intro}
\biggl(\int_{\R^n} \lvert f(x)\rvert^q \, \delta_E(x)^{(q/p)(n-p+\beta)-n}\,dx\biggr)^{1/q}
\le C\biggl(\int_{\R^n} \lvert \nabla f(x)\rvert^p\,\delta_E(x)^\beta\,dx\biggr)^{1/p}.
\end{equation}
When the closed set $E\subset\R^n$ is given, the main question is whether there exists a constant $C>0$ such that this inequality 
holds for every $f\in C^\infty_0(\R^n)$. Notice that inequality~\eqref{e.eucl.weighted.intro} includes many well-known special cases: for $1\le p<n$, $q=np/(n-p)$ and $\beta=0$ we recover the usual Sobolev inequality, 
the case $p=q$ is the (weighted) $(p,\beta)$-Hardy inequality, and for $E=\{0\}$ these inequalities are known as Caffarelli--Kohn--Nirenberg inequalities. 

We consider metric space versions of Hardy--Sobolev inequalities in Section~\ref{s.application}. Natural test functions in this setting are Lipschitz functions with bounded support, and then $|\nabla f|$ is replaced with an upper gradient of $f$.  
In a $Q$-regular metric space $X$, the exponent $Q>1$ plays the same role as $n$ does in $\R^n$, and in this case the closed set $E\subset X$ is assumed to satisfy 
\begin{equation}\label{e.Q-reg.bound}
\dima(E) < \min \biggl\{ \frac q p (Q-p+\beta) \, , \, Q - \frac{\beta}{p-1}  \biggr\}\,\,.
\end{equation}
In a non-regular space $X$, the Assouad dimension is again replaced with the codimension, and $Q>1$ is assumed to be such that the measure lower bound $\mu(B(x,r))\ge Cr^Q$ holds for all $x\in X$ and all $r>0$.

The proofs of these Hardy--Sobolev inequalities are based on the knowledge of the $A_p$-properties of the distance weights and the general theory of $A_p$-weighted inequalities that has been developed in the setting of metric spaces by P\'erez and Wheeden in~\cite{MR1962949} and that will be discussed with more details in Section~\ref{s.weights}. Our main tool concerning general $A_p$-weighted theory is Theorem~\ref{t.r_bounded}, which is essentially a combination of~\cite[Theorem~2.1]{MR1962949} and~\cite[Theorem~2.4]{MR1962949}. The first of these two is, in turn, a metric space generalization of an Euclidean result due to Muckenhoupt and Wheeden~\cite{MW} giving single weight control for the Riesz potential $\mathcal{I}_s$ in terms of a maximal operator, while the second result gives a two weight $L^p$--$L^q$ control for this maximal operator; a Euclidean version of the latter was proved by P\'erez \cite[Theorem~1.1]{MR1052009}.

In addition, we establish in Section~\ref{s.fractional} fractional versions of Hardy--Sobolev inequalities in metric spaces, again based on the general theory from Section~\ref{s.weights}. In the fractional case, with order of smootheness $0<s<1$, the first term in the minimum in~\eqref{e.Q-reg.bound} is replaced with $\frac q p (Q-sp+\beta)$ and the measure lower bound $\mu(B(x,r))\ge Cr^Q$ is assumed to hold with $Q>s$. We also show the optimality of the bound $\dima(E) < \frac q p (Q-sp+\beta)$ for fractional inequalities; see Proposition~\ref{p.weighted_Fract_optimality}. In $\R^n$, the optimality of the corresponding bound is well-understood for non-fractional inequalities, cf.~\cite{LV}. On the contrary, we do not know whether the bound $\dima(E) < Q - \frac{\beta}{p-1}$ is really needed in the case of fractional inequalities, but since this bound can be seen to be necessary at least in some instances of the non-fractional inequalities, it is not possible to remove this bound from the general $A_p$-approach. See Remarks~\ref{rem.optimality} and~\ref{rem.opt_HS} for more discussion on these dimensional bounds. 

In the special case $X=\R^n$, our results concerning the Hardy--Sobolev inequality~\eqref{e.eucl.weighted.intro} are essentially the same as the corresponding results from~\cite{LV} when $\beta\le 0$. On the other hand, when $\beta>0$ and $\dima(E)<n-1$, our results are weaker than the results in~\cite{LV}, but when $\beta>0$ and $\dima(E)\ge n-1$, we actually obtain an improvement to the results in~\cite{LV}; see Remark~\ref{rem.impro}. In more general metric spaces, all of our results concerning Hardy--Sobolev inequalities and their fractional versions appear to be new in the case $q>p$.

Let us also remark that in this work we only consider {\em global} Hardy--Sobolev inequalities, that is, the integrations are taken over the whole space $X$. In these inequalities the set $E$ needs to be rather ``thin'', which is illustrated by the fact that there is an upper bound for the dimension of $E$. In addition, in this case the test functions need not vanish in $E$. In the other typical instance of Hardy--Sobolev inequalities one assumes that $E$ is ``thick'',  whence there in particular is a lower bound for the dimension of $E$, and then the test functions are assumed to have a compact support in the open set $\Omega=X\setminus E$. See for instance~\cite{LV} for the Euclidean case of these inequalities and more comments related to the distinction between the ``thin'' and ``thick'' cases. Our characterization of the distance-type $A_p$-weights in fact shows that the present approach to Hardy--Sobolev -inequalities can not be applied in the ``thick'' case, where the natural dimensional bounds for the inequalities are not compatible with the bounds for the $A_p$-properties of distance functions.

In conclusion, the present $A_p$-weight approach to Hardy--Sobolev inequalities certainly has limitations, especially when dealing with weight exponents $\beta>0$, and it can not be applied in the ``thick'' case of these inequalities, where different tools need to be used. Nevertheless, as a positive answer to question (ii) we see that in many cases --- in particular in the most important case $\beta=0$ of the global Hardy--Sobolev -inequalities --- the present approach indeed yields optimal results for global Hardy--Sobolev -inequalities and also for the corresponding fractional inequalities both in $\R^n$ and in more general metric spaces. 

\section{Preliminaries on metric spaces}\label{s.metric}

We assume throughout this paper that $X=(X,d,\mu)$ is 
a metric measure space (with $\#X\ge 2$), where
$\mu$ is a Borel measure supported on $X$ such that $0<\mu(B)<\infty$ for
all (open) balls 
\[B=B(x,r):= \{y\in X : d(x,y)< r\}\]
with $x\in X$ and $r>0$. We make the tacit assumption that each ball $B\sub X$ has a fixed center $x_B$ 
and radius $\rad(B)$, and thus notation such as $\ell B = B(x_B,\ell \rad(B))$ 
is well-defined for all $\ell>0$. When $E,F\subset X$, we let $\diam(E)$ denote the diameter of $E$ and $\dist(E,F)$ is the distance between the sets $E,F\subset X$, and in particular we use $\delta_E(x)=\dist(x,E)=\dist(\{x\},E)$ to denote the distance from a point $x\in X$ to the set $E$.

We also assume throughout
that $\mu$ is \emph{doubling}, that is, there is a constant $C_D>0$ such that 
whenever $x\in X$ and $r>0$, we have
\begin{equation}\label{e.doubling}
  \mu(B(x,2r))\le C_D\, \mu(B(x,r)).
\end{equation}
Iteration of~\eqref{e.doubling} shows that if $\mu$ is doubling, then there exist an exponent $\sigma>0$ and a constant $C_*>0$ such that the quantitative doubling condition
\begin{equation}\label{e.doubling_quant}
  \frac{\mu(B(y,r))}{\mu(B(x,R))}\ge C_*\Bigl(\frac {r}{R}\Bigr)^\sigma
\end{equation}
holds whenever $B(y,r)\subset B(x,R)\subset X$; see~\cite[Lemma~3.3]{BB}.

In some of our results we also need to assume that
for a given exponent $\eta>0$ there 
is a constant $C^*=C^*(X,\eta)>0$ 
such that the {\em (quantitative) reverse doubling condition} 
\begin{equation}\label{reverse doubling}
 \frac{\mu(B(y,r))}{\mu(B(x,R))} \le C^*\Bigl(\frac{r}{R}\Bigr)^\eta
\end{equation}
holds whenever 
$B(y,r)\subset B(x,R)\subset X$. Notice that under this condition
$\mu(\{x\})=0$ for all $x\in X$ and 
the space $X$ is necessarily unbounded, since estimate~\eqref{reverse doubling}
holds for arbitrary large radii $R$.
If the space $X$ is unbounded and connected
(and $\mu$ is doubling, as we assume throughout), 
then there exists some $\eta>0$ such that \eqref{reverse doubling}
holds whenever $B(y,r)\subset B(x,R)\subset X$; cf.~\cite[Corollary~3.8]{BB}.

The space  $X=(X,d,\mu)$ is said to be {\em Ahlfors $Q$-regular} (or simply {\em $Q$-regular}), for $Q > 0$, 
if there is a constant $C \ge 1$ such that
\[
  C^{-1}r^Q \le \mu(B(x,r)) \le Cr^Q
\]
for all $x \in X$ and every $0 < r < \diam(X)$.
Notice that if $X$ is $Q$-regular, then $\mu$ is doubling,
and moreover~\eqref{reverse doubling} holds for all $\eta \le Q$ if $X$ is unbounded. 
The Ahlfors $Q$-regularity of the space $X$ is equivalent to the requirement that
there is a constant $C \ge 1$ such that
\[
  C^{-1}r^Q \le \Ha^Q(B(x,r)) \le Cr^Q
\]
for all $x \in X$ and every $0 < r < \diam(X)$, where $\Ha^Q$ is the $Q$-dimensional
Hausdorff measure on $X$. 
Consult, for instance, \cite[Section~1.4]{MackayTyson2010}
for the definition of the Hausdorff measure and the above equivalence
concerning $Q$-regularity.

If the space $X$ is not Alhfors $Q$-regular, then
it is often convenient to describe the sizes of the subsets of $X$ in terms of 
\emph{codimensions} rather than dimensions. For instance, the 
\emph{Hausdorff codimension} of $E\sub X$ 
(with respect to $\mu$) is the number
\[\codimh(E) = \sup\big\{q\geq 0 : \Ha_R^{\mu,q}(E)=0\big\},\]
where 
\[
\Ha_R^{\mu,q}(E) = \inf\bigg\{ \sum_{k} \rad(B_k)^{-q}\mu(B_k) : E \subset \bigcup_{k} B_k,\ 
                               \rad(B_k) \le R \bigg\}
\] 
is the \emph{Hausdorff content of codimension $q$};
if $\mu(E)>0$, then we set $\codimh(E)=0$.
If $X$ is $Q$-regular, 
then we have for all $E\sub X$ that $Q-\codimh(E)=\dimh(E)$, the usual Hausdorff dimension.

For this paper, the most important notion of (co)dimension is the Assouad (co)dimension.
When $E\sub X$, the \emph{(upper) Assouad dimension} of $E$, denoted $\udima(E)$ (or simply $\dima(E)$, as in the Introduction), 
is the infimum of exponents $s\ge 0$ for which 
there is a constant $C \ge 1$ such that for all $x\in E$ and every $0<r<R<2\diam(X)$, the set $E\cap B(x,R)$ can be covered by at most $C(r/R)^{-s}$ balls of radius $r$. 
We remark that $\udima$ is the ``usual'' Assouad dimension
found in the literature, and we refer to~\cite{MR1608518} for its basic properties and a 
historical account and to~\cite{Fraser} (and the references therein) for more recent
results related to the (upper) Assouad dimension.

The corresponding codimension, the \emph{(lower) Assouad codimension} $\lcodima(E)$,
is defined in terms of the measures of the
(open) $r$-neighborhoods
\[E_r=\{x\in X:\dist(x,E)<r\}\]
of $E\sub X$.
Namely, $\lcodima(E)$ is the supremum of
all $\rho \ge 0$ for which there exists a constant $C \ge 1$ such that
\begin{equation*}\label{eq:bouli*}
\frac{\mu(E_r\cap B(x,R))}{\mu(B(x,R))}\le C\Bigl(\frac r R\Bigr)^\rho
\end{equation*}
for every $x\in E$ and all $0<r<R<2\diam(X)$.
Notice in particular that $\lcodima(E)>0$ implies that $\mu(E)=0$,
by the Lebesgue differentiation theorem; see e.g.~\cite[Theorem~1.8]{HEI}.
If $X$ is $Q$-regular, then it is not hard to see that
$\udima(E)  = Q - \lcodima(E)$ 
for all $E\sub X$, cf.~\cite[(3.11)]{KLV}. On the other hand, if $E\subset X$
is Ahlfors $\lambda$-regular (as a subspace of $X$, endowed with the induced metric and the $\lambda$-dimensional Hausdorff measure $\Ha^\lambda$), then $\dimh(E)= \udima(E)=\lambda$;
see e.g.~\cite[Section~1.4.4]{MackayTyson2010}.

Let us remark here that the terminology of upper Assouad dimension and
lower Assouad codimension is due to the fact that there also exists a corresponding
``dual'' pair, i.e., the lower Assouad dimension and
the  upper Assouad codimension; see~\cite{KLV}. Neither of these two will be needed in this paper,
but they play an important role in the ``thick'' cases of Hardy and Hardy--Sobolev
inequalities; see~\cite{LHA,LV}.

When $E\subset X$, a function $u\colon E\to \R$ is said to be ($L$-)\emph{Lipschitz}, if
\[
|u(x)-u(y)|\leq L d(x,y)\qquad \text{ for all } x,y\in E\,.
\]
We denote the set of all Lipschitz functions $u\colon E\to\R$ by $\Lip(E)$. 
In addition, $\Lip_0(X)\subset \Lip(X)$ denotes the set of all Lipschitz functions 
$u\in\Lip(X)$ for which there exists some ball $B$ such that $u(x)=0$ for all $x\in X\setminus B$.

When $B$ is a ball in $X$, the integral average of a function
$u\in L^1(B)$ is
\[
   u_B:=\frac{1}{\mu(B)}\, \int_B u\, d\mu =:\vint_{B}\, u\, d\mu\,.
\]

A {\em weight} is a measurable function $w$ on $X$ such that 
$w(x)>0$ for $\mu$-almost every $x\in X$
and $\int_B w\,d\mu<\infty$ whenever $B\subset X$ is a ball. We write 
$w(E)=\int_E w\,d\mu$ if $E\subset X$ is a measurable set and
$w$ is a weight.
A weight $w$ belongs to the {\em Muckenhoupt class $A_p$} of weights (i.e. $w\in A_p$), for $1\le p<\infty$, if
there is a constant $A>0$ such that, for every ball $B$ in $X$, 
\begin{equation}\label{a_p}
\biggl( \vint_B w\,d\mu\biggr) \biggl(\vint_B w^{-1/(p-1)}\,d\mu\biggr)^{p-1} \le A\,
\quad\text{ if } p>1\,,
\end{equation}
and
\begin{equation}\label{a_1}
\biggl( \vint_B w\,d\mu\biggr) \esssup_{y\in B} \frac{1}{w(y)} \le A\,
\quad\text{ if } p=1\,.
\end{equation}
It follows from these $A_p$ conditions
and H\"older's inequality that Muckenhoupt weights  
satisfy the following strong doubling property:  if $1\le p<\infty$ and $w\in A_p$, then
\begin{equation}\label{e.sdoubling}
w(B)\le A\bigg(\frac{\mu(B)}{\mu(E)}\bigg)^p w(E)
\end{equation}
whenever $E$ is a measurable subset of a ball $B\subset X$ with $\mu(E)>0$. 
In particular, the measure $w\,d\mu$ satisfies
the doubling condition \eqref{e.doubling}. 
When $1<p<\infty$, it follows immediately from the $A_p$-condition~\eqref{a_p} for a weight $w$ that  
\begin{equation}\label{e.Ap_equiv}
 w\in A_p \iff w^{1/(1-p)}\in A_{p/(p-1)}\,.
\end{equation}

A weight $w$ is said to belong to the {\em Muckenhoupt class $A_\infty$} (i.e. $w\in A_\infty$) 
if there are constants $C>0$ and $\delta>0$ such that
\[
w(E) \le C \biggl(\frac{\mu(E)}{\mu(B)}\biggr)^\delta w(B)
\]
whenever $E$ is a measurable subset of a ball $B\subset X$. 
By \cite[Chapter I, Theorem 15]{StrombergTorchinsky}, it holds for 
every $1<p<q<\infty$ that
\begin{equation}\label{ap_relations}
A_1\subset A_p\subset A_q\subset A_\infty.
\end{equation}
It is also well known that in the Euclidean case with the Lebesgue measure
$A_\infty=\bigcup_{1\le p<\infty} A_p$.
In a metric space $X$ this equality is valid under the assumptions that
the measure $\mu$ is doubling and
$\mu(B(x,r))$ increases
continuously with $r$ for each $x\in X$;
we refer to~\cite[Chapter I, Theorem~18]{StrombergTorchinsky}. 
However, there
exist metric spaces where the class of $A_\infty$-weights is strictly larger than the
union $\bigcup_{1\le p<\infty} A_p$; see~\cite{MR2815740} and~\cite{StrombergTorchinsky}.

\section{Powers of distance functions as weights}\label{s.distance}

In this section we investigate the connections between 
the (lower) Assouad codimension of a closed set $E\subset X$
and the $A_p$-properties of the powers of the distance function 
$\delta_E = \dist(\cdot,E)$. Recall that $X$ is a metric space equipped with a doubling mesure $\mu$;
no further assumptions on $X$ are needed in this section.

\begin{defn}
We say that a closed set $\emptyset\neq E\subset X$ satisfies the {\em Aikawa condition} with an exponent $\alpha\geq 0$ and a constant $C \ge 1$ if inequality
\begin{equation}\label{eq:aikawa}
  \int_{B(x,r)} \delta_E(y)^{-\alpha}\,d\mu(y) \le C r^{-\alpha}\mu(B(x,r))
\end{equation}
holds for every $x\in E$ and all $0<r<2\diam(X)$.
We interpret the integral to be $+\infty$ if $\alpha>0$ and $E$ has a positive measure.
\end{defn}

\begin{rem}\label{rmk:LT}
Let $\emptyset\neq E\subset X$ be a closed set.
The lower Assouad codimension of $E$ can be characterized as
the supremum of all exponents $\alpha\ge 0$ for which 
$E$ satisfies the Aikawa condition with some constant $C\ge 1$.
In particular, the Aikawa condition holds for all  
$\alpha<\lcodima(E)$ (for $\alpha\le 0$ this is trivial).
This characterization is essentially \cite[Theorem~5.1]{MR3055588}.  Notice
that in~\cite{MR3055588} the relevant radii are always bounded from above by
$\diam(E)$, whereas presently the upper bound for radii is $2\diam(X)$ both in the
definition of $\lcodima$ and in the above Aikawa condition~\eqref{eq:aikawa}. Nevertheless, the proof from~\cite{MR3055588} works also in this case with obvious minor
modifications. 

We also remark that if $\alpha<\lcodima(E)$ and $\mu(E)=0$, it follows from~\eqref{eq:aikawa} 
that the function $w=\delta_E^{-\alpha}=\dist(\cdot,E)^{-\alpha}$ is a weight. Since
$\lcodima(E)>0$ implies that $\mu(E)=0$, it in particular follows that the function 
$w$ is a weight if $0\le \alpha<\lcodima(E)$.
\end{rem}

A concept of dimension defined via integrals as in~\eqref{eq:aikawa}
was used by Aikawa in~\cite{Aikawa1991} for subsets of $\R^n$.
Thus, e.g.\ in~\cite{MR3055588}, where the interest originated from such
integral estimates, the lower Assouad codimension was called
the \emph{Aikawa codimension}.

The following lemma, essentially \cite[Lemma~2.2]{LHA}, records the fact that the Aikawa condition~\eqref{eq:aikawa} enjoys
self-improvement. This result is a straight-forward consequence of the famous 
self-improvement result for reverse H\"older inequalities, which in $\R^n$ is due to
Gehring~\cite{geh}. The proof in~\cite{LHA} is based on a metric space version of
the Gehring lemma; see e.g.~\cite[Theorem~3.22]{BB}.
We emphasize that besides the doubling property of $\mu$
no other assumptions are required for the space $X$ in Lemma~\ref{lemma:aikawa si}.

\begin{lem}\label{lemma:aikawa si}
Let $\emptyset\neq E\sub X$ be a closed set that
satisfies the Aikawa condition
with an exponent $\alpha >0$
and a constant $C_0\ge 1$. 
Then there exist $\delta>0$ and $C\ge 1$, depending only on the 
given data, such that $E$ satisfies the Aikawa condition 
with the exponent $\alpha+\delta$
and the constant $C$. 
\end{lem}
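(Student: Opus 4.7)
The plan is to interpret the Aikawa condition as a borderline reverse H\"older inequality for the weight $w=\delta_E^{-\alpha}$, upgrade it via Gehring's lemma to a genuine reverse H\"older inequality at some exponent $q>1$, and then read off the improved Aikawa condition at exponent $\alpha q$. Setting $\delta=\alpha(q-1)>0$ at the end will produce the claim.

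The first and main step is to establish the strong $A_1$-type estimate $\vint_B w\,d\mu\leq C\,\essinf_B w$ for every ball $B=B(z,r)\subset X$. I would split into two cases depending on whether $B$ sits close to $E$. If $\delta_E(z)\geq 2r$, the $1$-Lipschitz property of $\delta_E$ forces $\tfrac12\delta_E(z)\leq\delta_E(y)\leq\tfrac32\delta_E(z)$ on $B$, so $w$ is comparable to a constant on $B$ and the estimate is trivial. Otherwise $\delta_E(z)<2r$, and one can choose $x\in E$ with $d(x,z)<2r$, so that $B(z,r)\subset B(x,3r)$. Applying the hypothesis~\eqref{eq:aikawa} at the point $x$ with a radius comparable to $r$, together with the doubling property to replace $\mu(B(x,3r))$ by $\mu(B(z,r))$, gives $\vint_B w\,d\mu\leq Cr^{-\alpha}$. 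Since $\delta_E(y)\leq d(y,x)<3r$ for $y\in B$, we also have $\essinf_B w\geq (3r)^{-\alpha}$, and the two estimates combine to the desired inequality.

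Next, the metric Gehring lemma \cite[Theorem~3.22]{BB}, applied to $w$ with input from Step~1, yields an exponent $q>1$ and a constant $C'$, depending only on the given data and the doubling constant, such that the reverse H\"older estimate
$$\biggl(\vint_B w^q\,d\mu\biggr)^{1/q}\leq C'\,\vint_B w\,d\mu$$
holds for all balls $B\subset X$. If the precise form of the cited theorem requires a reverse H\"older inequality at some exponent $p>1$ as input, rather than the strong $A_1$-bound from Step~1, one can first pass through the standard self-improvement of $A_1$ into a reverse H\"older class to supply the needed hypothesis.

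Finally, for $x\in E$ and $0<r<2\diam(X)$, combining this reverse H\"older inequality with the original Aikawa condition gives
$$\vint_{B(x,r)}\delta_E^{-\alpha q}\,d\mu\;=\;\vint_{B(x,r)} w^q\,d\mu\;\leq\;(C')^q\biggl(\vint_{B(x,r)} w\,d\mu\biggr)^q\;\leq\;(C'C_0)^q\, r^{-\alpha q},$$
which is exactly the Aikawa condition at the improved exponent $\alpha+\delta$ with $\delta=\alpha(q-1)$. The delicate piece is Step~1, where the case analysis and the uniformity of constants under the Lipschitz and doubling controls must be handled carefully; the Gehring step is a citation, and the final calculation is arithmetic.
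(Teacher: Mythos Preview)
Your proposal is correct and follows exactly the route the paper indicates: the lemma is cited from \cite[Lemma~2.2]{LHA} with the remark that it rests on the metric Gehring lemma \cite[Theorem~3.22]{BB}, and your Step~1 establishing the $A_1$ bound for $w=\delta_E^{-\alpha}$ is essentially the same computation the paper carries out in the proof of Theorem~\ref{t.a_infty}(A). Your acknowledgment of the possible need to bridge from the $A_1$ bound to a genuine reverse H\"older hypothesis before invoking Gehring is apt, and the final arithmetic is exactly as you say.
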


The following theorem is our main result concerning the $A_p$-properties of
distance weights.
In~\cite{MR3215609}, corresponding results were obtained in metric spaces,
but using a completely different approach and
under the much stronger assumption that both $X$ and $E$ satisfy 
Ahlfors regularity conditions; see e.g.~\cite[Theorems~6 and~7]{MR3215609}.

\begin{thm}\label{t.a_infty}
Let $\emptyset\neq E\subset X$ be a closed set and let $\alpha\in\R$ and
$w=\delta_E^{-\alpha}$. 
Then the following statements hold.
\begin{itemize}
\item[(A)] If $\lcodima(E)>\alpha\geq 0$, then $w\in A_p$ for all $1 \le p \le\infty$.
\item[(B)] If $\alpha<0$ and $1<p<\infty$ are such that
\[
\lcodima(E)>\frac{\alpha}{1-p}\,, 
\]
then $w\in A_p$.
\item[(C)] If $\lcodima(E)>\max\{0,\alpha\}$, then $w\in \bigcup_{1\le p<\infty} A_p$.
\end{itemize}
\end{thm}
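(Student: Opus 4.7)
The plan is to reduce every case to the Aikawa condition (Remark~\ref{rmk:LT}) via a standard dichotomy on the position of a ball relative to $E$. Given a ball $B=B(x,r)$, I would split into the \emph{far} regime $\delta_E(x)\ge 2r$, in which the triangle inequality gives $\delta_E(y)\in [\delta_E(x)/2,3\delta_E(x)/2]$ for all $y\in B$, so $w=\delta_E^{-\alpha}$ is pointwise comparable to the constant $\delta_E(x)^{-\alpha}$ and every $A_p$ test is trivially satisfied with a universal constant; and the \emph{close} regime $\delta_E(x)<2r$, where one may pick $z\in E$ with $d(x,z)<2r$ so that $B\subset B(z,3r)\subset B(x,5r)$, and the doubling property of $\mu$ gives $\mu(B(z,3r))\le C\mu(B)$. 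The key observation is that in the close regime the center $z$ lies in $E$, which is exactly what is needed to invoke the Aikawa condition.

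For part (A), since $0\le\alpha<\lcodima(E)$, Remark~\ref{rmk:LT} supplies the Aikawa condition with exponent $\alpha$. Applying it to $B(z,3r)$ and using doubling yields $\vint_B\delta_E^{-\alpha}\,d\mu\le C\,r^{-\alpha}$, while $\delta_E(y)\le 3r$ on $B$ forces $\essinf_{y\in B}w(y)\ge(3r)^{-\alpha}$. Together these give the $A_1$ condition~\eqref{a_1} with a constant independent of $B$, and~\eqref{ap_relations} then promotes this to $A_p$ for every $1\le p\le\infty$. The case $\alpha=0$ is trivial since $w\equiv 1$.

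For part (B), I would rewrite $w^{-1/(p-1)}=\delta_E^{\alpha/(p-1)}$. The hypothesis $\lcodima(E)>\alpha/(1-p)=-\alpha/(p-1)>0$ furnishes the Aikawa condition with this positive exponent, which in the close regime controls the second factor of~\eqref{a_p} by
\[
\Bigl(\vint_B\delta_E^{\alpha/(p-1)}\,d\mu\Bigr)^{p-1}\le C\,r^{\alpha}.
\]
The first factor needs no Aikawa input: since $\alpha<0$ and $\delta_E(y)\le 3r$ on $B$, it is bounded directly by $(3r)^{-\alpha}$. The product of the two bounds is a dimensionless constant. For part (C), when $\alpha\ge 0$ the conclusion follows from (A); when $\alpha<0$, the hypothesis $\lcodima(E)>0$ lets one choose $p$ so large that $-\alpha/(p-1)<\lcodima(E)$, and then (B) applies.

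I do not expect a real obstacle. The only conceptual step is recognizing that the Aikawa hypothesis demands a center in $E$, which is what forces the enlargement $B\subset B(z,3r)$, and the doubling property of $\mu$ absorbs that enlargement uniformly. The remaining issues are routine bookkeeping, for instance ensuring the enlarged radius $3r$ stays within the Aikawa range $<2\diam(X)$, which is automatic when $X$ is unbounded and handled by a trivial size consideration when $X$ is bounded.
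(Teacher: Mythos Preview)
Your proof is correct and follows essentially the same approach as the paper: the same close/far dichotomy on balls, the same use of the Aikawa condition via Remark~\ref{rmk:LT} together with doubling in the close regime, and the same trivial estimate in the far regime. The only minor difference is that for part (B) the paper invokes the duality relation~\eqref{e.Ap_equiv} to reduce directly to part (A), whereas you verify the $A_p$ condition by hand; the computations are the same once unfolded.
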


\begin{proof}
(A) Since $A_1\subset A_p$ for all $p\ge 1$, it suffices to show that $w\in A_1$,
i.e., that inequality~\eqref{a_1} holds for all balls in $X$. 

To this end, fix a ball $B=B(x,r)$ in $X$; without loss
of generality, we may assume that $0<r<2\diam(X)$. Assume first that $2B\cap E \not=\emptyset$. 
Then there is $z\in E$ such that $B\subset B(z,3r)$. 
By Remark \ref{rmk:LT}, inequality \eqref{eq:aikawa} holds for the exponent $\alpha$. 
Using this inequality and the doubling property of $\mu$, we obtain
\begin{equation}\label{e.first}
\begin{split}
\vint_{B(x,r)} w(y)\,d\mu(y)
&\le \frac{1}{\mu(B(x,r))}\int_{B( z,3r)} \delta_E(y)^{-\alpha}\,d\mu(y)\\
&\le Cr^{-\alpha}\frac{\mu(B( z,3r))}{\mu(B(x,r))}
\le Cr^{-\alpha}\,.
\end{split}
\end{equation}
Since $\alpha\ge 0$, we have for every $y\in B\setminus E$ that
\[
\frac{1}{w(y)} = \delta_E(y)^{\alpha} \le d(y, z)^{\alpha}\le 3^{\alpha} r^{\alpha}\,.
\]
Combining~\eqref{e.first} and the above estimate and using the fact that $\mu(E)=0$,
we see that inequality~\eqref{a_1} holds in the case $2B\cap E\not=\emptyset$.

On the other hand, if $2B\cap E=\emptyset$, then 
\begin{equation}\label{distances}
\delta_E(y)/3\le \dist(B,E)\le \delta_E(y) 
\end{equation}
for all $y\in B$.
It follows that 
\[
\biggl(\vint_{B(x,r)} w(y)\,d\mu(y)\biggr)\esssup_{y\in B} \frac{1}{w(y)} \le C\dist(B,E)^{-\alpha}\dist(B,E)^{\alpha}\le C\,,
\]
and thus inequality~\eqref{a_1} holds also in the case $2B\cap E=\emptyset$. 
This proves that $w\in A_1$, as desired.

(B)
From (A) it follows that $\delta_E^{-\alpha/(1-p)} \in A_{\tilde{p}}$ for all $1\leq \tilde{p}\leq \infty$. In particular, \[\delta_E^{-\alpha/(1-p)} \in A_{p/(p-1)}\,,\] which by \eqref{e.Ap_equiv} implies that $w=\delta_E^{-\alpha} \in A_p$.

The final statement (C) follows from a combination of parts (A) and (B).
\end{proof}

Next we turn to partial converses of the statements (A) and (B) in Theorem~\ref{t.a_infty}. 
The following Theorem~\ref{e.self} reveals a surprising self-improvement phenomenon for the 
$A_p$-properties of functions  
$\delta_E^{-\alpha}$, where $\alpha>0$ and $E$ is porous. 
Recall that a set $E\subset X$ is porous, if there is a constant $0 < c < 1$ such that for every 
$x\in E$ and all $0 <r<2\diam(X)$ there exists a point $y\in X$ such that $B(y,cr)\subset B(x,r)\setminus E$.

\begin{thm}\label{e.self}
Let $\emptyset\neq E\sub X$ be a closed and
porous set, and let $\alpha>0$ and $w=\delta_E^{-\alpha}$. Then the following conditions are equivalent.
\begin{itemize}
\item[(A)] $\lcodima(E)>\alpha$\,;
\item[(B)] $w\in A_1$\,;
\item[(C)] $w \in A_q$, for some $1<q<\infty$.
\end{itemize}
\end{thm}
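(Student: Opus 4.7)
The plan is to prove the cyclic chain (A)$\Rightarrow$(B)$\Rightarrow$(C)$\Rightarrow$(A). The first two implications are essentially free, and all of the work is concentrated in the last one.

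For (A)$\Rightarrow$(B), note that the hypothesis $\lcodima(E)>\alpha\ge 0$ is precisely the case handled by Theorem~\ref{t.a_infty}(A), which in fact gives $w\in A_1$. Then (B)$\Rightarrow$(C) is immediate from the chain of inclusions~\eqref{ap_relations}, taking any $q>1$.

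The substantive step is (C)$\Rightarrow$(A). I would fix $x\in E$ and $0<r<2\diam(X)$, set $B=B(x,r)$, and apply the $A_q$-condition~\eqref{a_p} to $w=\delta_E^{-\alpha}$, rearranged as
\begin{equation*}
\vint_B \delta_E^{-\alpha}\,d\mu \le A\biggl(\vint_B \delta_E^{\alpha/(q-1)}\,d\mu\biggr)^{-(q-1)}.
\end{equation*}
Since $\alpha/(q-1)>0$, in order to bound the left-hand side from above I need a \emph{lower} bound on the average of $\delta_E^{\alpha/(q-1)}$ on $B$. This is where porosity is decisive: it supplies a point $y_0$ with $B(y_0,cr)\subset B\setminus E$, so that $\delta_E\ge cr/2$ on the smaller ball $B(y_0,cr/2)\subset B$. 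Combining this with the quantitative doubling bound~\eqref{e.doubling_quant} applied to the inclusion $B(y_0,cr/2)\subset B(x,r)$ yields
\begin{equation*}
\vint_B \delta_E^{\alpha/(q-1)}\,d\mu \ge C_1\, r^{\alpha/(q-1)},
\end{equation*}
and substitution produces the Aikawa condition $\vint_B \delta_E^{-\alpha}\,d\mu \le C_2\, r^{-\alpha}$ at the exponent $\alpha$ for every such ball centered in $E$.

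Finally, Lemma~\ref{lemma:aikawa si} upgrades the Aikawa condition from exponent $\alpha$ to exponent $\alpha+\delta$ for some $\delta>0$, and the characterization in Remark~\ref{rmk:LT} then forces $\lcodima(E)\ge \alpha+\delta>\alpha$, which is~(A). I expect the two subtle points to be the porosity-driven lower bound, which is what converts the abstract $A_q$-condition into the concrete Aikawa estimate, and the appeal to the Gehring-type self-improvement in Lemma~\ref{lemma:aikawa si}, which is essential in order to obtain the \emph{strict} inequality $\lcodima(E)>\alpha$ rather than only $\lcodima(E)\ge \alpha$.
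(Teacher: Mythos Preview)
Your proposal is correct and follows essentially the same approach as the paper: the cyclic chain (A)$\Rightarrow$(B)$\Rightarrow$(C) via Theorem~\ref{t.a_infty} and~\eqref{ap_relations}, and (C)$\Rightarrow$(A) by using porosity plus doubling to bound $\vint_B \delta_E^{\alpha/(q-1)}\,d\mu$ from below, then rearranging the $A_q$-condition to obtain the Aikawa estimate at exponent $\alpha$, and finally invoking Lemma~\ref{lemma:aikawa si} and Remark~\ref{rmk:LT} to gain the strict inequality. The paper's argument differs only in cosmetic details of the porosity step (it works with $B'=B(y_0,cr_0/2)$ and $\dist(B',E)\ge cr_0/4$ rather than your halved ball), but the mechanism and all cited tools are identical.
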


\begin{proof}
By Theorem~\ref{t.a_infty}, condition (A) implies both conditions (B) and (C), and
from the inclusions in~\eqref{ap_relations} it follows that (B) implies (C). Thus it suffices to show that (C) implies (A).

Let us hence assume that $w\in A_q$ for some $1<q<\infty$. 
By the $A_q$-condition~\eqref{a_p}, there is a constant $C>0$ such that 
\[
\biggl( \vint_B w(y)\,d\mu(y)\biggr) \biggl(\vint_B w(y)^{-1/(q-1)}\,d\mu(y)\biggr)^{q-1} \le C
\]
for every ball $B$ in $X$.
Now fix $x_0\in E$ and $0<r_0<2\,\diam(X)$, and let $B=B(x_0,r_0)$. 
Since $E$ is porous, there is a ball $B(x,r)=B'\subset B$ such that $r=cr_0/2$ and $\dist(B',E)\ge r/2$, and thus, using also the doubling property~\eqref{e.doubling_quant}, we obtain 
\begin{align*}
\biggl(\vint_B w(y)^{-1/(q-1)}\,d\mu(y)\biggr)^{q-1} 
&\ge \biggl(\frac{\mu(B(x,r))}{\mu (B(x_0,r_0))}\biggr)^{q-1}\biggl(\vint_{B'} w(y)^{-1/(q-1)}\,d\mu(y)\biggr)^{q-1} \\
&\ge  C\biggl(\vint_{B'} r^{\alpha/(q-1)}\,dy\biggr)^{q-1}\ge C r^{\alpha}.
\end{align*}
Combining the previous two estimates, we obtain
\begin{align*}
\int_{B} \delta_E(y)^{-\alpha}\,d\mu(y) = \int_B w(y)\,d\mu(y)
&\le C \mu(B)\biggl(\vint_B w(y)^{-1/(q-1)}\,d\mu(y)\biggr)^{1-q}\\
&\le C \mu(B)r^{-\alpha} \le C\mu(B)r_0^{-\alpha},
\end{align*}
showing that the closed set $E$ satisfies the Aikawa condition~\eqref{eq:aikawa} with the exponent
$\alpha>0$. By the self-improvement of the Aikawa condition, Lemma~\ref{lemma:aikawa si},
there then exists $\delta>0$ such that the Aikawa condition holds also with
the exponent $\alpha+\delta$, and so it follows from Remark~\ref{rmk:LT} that 
$\lcodima(E)\ge \alpha+\delta>\alpha$, proving condition (A).
\end{proof}

\begin{rem}\label{r.ap}
To see that a converse to Theorem~\ref{t.a_infty}(B) holds,
when $\emptyset\neq E\sub X$ is a closed and porous set,
we assume that $\alpha<0$ and $1<p<\infty$ are such that $w=\delta_E^{-\alpha}\in A_p$.
By the equivalence in~\eqref{e.Ap_equiv}, we then have that 
\[
\delta_E^{-\alpha/(1-p)} = w^{1/(1-p)}\in A_{p/(p-1)}\,, 
\]
and so it follows from Theorem~\ref{e.self} that $\lcodima(E)>{\alpha}/(1-p)$. 
\end{rem}

\begin{cor}\label{c.porous}
Let $\emptyset\neq E\subset X$ be a closed and porous set, and 
let $\alpha\in\R$ and $w=\delta_E^{-\alpha}$. 
Then 
\begin{itemize}
\item[(A)] $w\in A_p$, for $1<p<\infty$, if and only if $(1-p)\lcodima(E) < \alpha < \lcodima(E)$\,.
\item[(B)] $w\in A_1$ if and only if $0 \le \alpha < \lcodima(E)$\,.
\end{itemize}
\end{cor}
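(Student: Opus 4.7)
The plan is to assemble this corollary as a direct synthesis of the preceding results: Theorem~\ref{t.a_infty} supplies sufficiency, while Theorem~\ref{e.self} together with Remark~\ref{r.ap} supplies necessity, after splitting by the sign of $\alpha$. I would begin by recording a preliminary fact that is used implicitly throughout: for a porous set $E$ in a doubling space one has $\lcodima(E)>0$, which follows by iterating the porosity condition across scales to obtain a power-type decay of $\mu(E_r\cap B(x,R))/\mu(B(x,R))$; I would cite this standard fact rather than reprove it in the corollary itself.

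For the ``if'' direction of (A), when $\alpha\ge 0$ the hypothesis collapses to $0\le\alpha<\lcodima(E)$ and Theorem~\ref{t.a_infty}(A) applies. When $\alpha<0$, the lower bound $(1-p)\lcodima(E)<\alpha$, divided by the negative number $1-p$, is exactly the hypothesis $\lcodima(E)>\alpha/(1-p)$ of Theorem~\ref{t.a_infty}(B). The ``if'' direction of (B) is just Theorem~\ref{t.a_infty}(A) with $p=1$. For the ``only if'' direction of (A), suppose $w\in A_p$ and split on the sign of $\alpha$. If $\alpha>0$, the implication (C)$\Rightarrow$(A) of Theorem~\ref{e.self} gives $\alpha<\lcodima(E)$, while $(1-p)\lcodima(E)\le 0<\alpha$ is automatic. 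If $\alpha<0$, Remark~\ref{r.ap} yields $\lcodima(E)>\alpha/(1-p)$, which multiplied by $1-p<0$ becomes the desired lower bound, and $\alpha<0\le\lcodima(E)$ is automatic. If $\alpha=0$, both strict inequalities collapse to $\lcodima(E)>0$, which is the preliminary fact.

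For the ``only if'' direction of (B), the inclusion $A_1\subset A_p$ lets me import the bounds from part~(A), so the one remaining task is to exclude $\alpha<0$. This step is transparent: for $\alpha<0$ the weight $w=\delta_E^{-\alpha}=\delta_E^{|\alpha|}$ is continuous on $X$ and vanishes on $E$, so on any ball $B$ meeting $E$ one has $\essinf_B w=0$ while $\vint_B w\,d\mu>0$, contradicting the $A_1$ condition~\eqref{a_1}. Hence $\alpha\ge 0$ and together with $\alpha<\lcodima(E)$ the claim follows. The substantive content is already carried by Theorem~\ref{e.self} (which itself relies on the self-improvement Lemma~\ref{lemma:aikawa si}) and by the porosity-implies-positive-codimension fact, so the main obstacle in the corollary is purely bookkeeping: keeping the four sign cases for $\alpha$ aligned with the correct invocation of the preceding results and making sure the two strict inequalities in the statement of (A) really reduce, case by case, to what those results deliver.
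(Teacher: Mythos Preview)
Your proposal is correct and follows essentially the same approach as the paper's proof: both assemble the corollary from Theorem~\ref{t.a_infty}, Theorem~\ref{e.self}, Remark~\ref{r.ap}, and the fact that porous sets satisfy $\lcodima(E)>0$, with the exclusion of $\alpha<0$ in part~(B) handled by observing that on balls meeting $E$ the weight $w=\delta_E^{\lvert\alpha\rvert}$ has essential infimum zero (equivalently, $\esssup 1/w=\infty$) while its average is positive. Your case-by-case bookkeeping is slightly more explicit than the paper's one-sentence summary, but the content is identical.
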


\begin{proof} 
All the claims, except the necessity of the condition $\alpha\ge 0$ in part (B), follow from Theorems~\ref{t.a_infty} and~\ref{e.self}, Remark~\ref{r.ap}, and  
the fact that $\lcodima(E)>0$ for porous sets (cf.~\cite[Remark~3.6]{KLV}).
The remaining claim can be justified as follows.
If $\alpha<0$,  then for all balls $B=B(x,r)$ with $x\in E$
it holds that  $\vint_B w\,d\mu\ge cr^{-\alpha}$, with $c>0$ independent of $x$ and $r$, while 
 \[\esssup_{y\in B} \frac{1}{w(y)}=\infty\,.\] Hence the $A_1$ condition~\eqref{a_1} is clearly not satisfied,
and so we conclude that $w\in A_1$ is possible only when $\alpha\ge 0$. 
\end{proof}

If $X$ is $Q$-regular, then 
$\lcodima(E)=Q-\udima(E)$ for all $E\subset X$, 
and moreover a set $E\subset X$ is porous
if and only if $\udima(E)<Q$; see e.g.~\cite[Lemma~3.12]{BHR}.
Hence Theorem~\ref{thm.char_intro} follows immediately from Corollary~\ref{c.porous}. 
In particular, in the Euclidean space $\R^n$ with the Lebesgue measure, we obtain the following Corollary~\ref{c.a_infty_rn}. 
Similar characterizations were obtained already by Horiuchi in~\cite[Lemma~2.2]{MR1118940}.
However, in~\cite{MR1118940} the dimensional condition for $E\subset\R^n$ was formulated using the so-called
``$P(s)$-property''. This property was only recently shown to be intimately connected with the
Assouad dimension, see~\cite[Theorem~3.4]{LV}.

\begin{cor}\label{c.a_infty_rn}
Let $\emptyset\neq E\subset \R^n$ be a closed set with $\udima(E)<n$, and let $\alpha\in\R$ and
$w=\delta_E^{-\alpha}$.
Then 
\begin{itemize}
\item[(A)] $w\in A_p$, for $1<p<\infty$, if and only if $(1-p)(n-\udima(E)) < \alpha < n-\udima(E)$\,.
\item[(B)] $w\in A_1$ if and only if $0\le \alpha < n-\udima(E)$\,.
\end{itemize}
\end{cor}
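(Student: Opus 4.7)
The plan is to obtain Corollary~\ref{c.a_infty_rn} as an immediate instance of the more general Corollary~\ref{c.porous}, after verifying that the hypotheses of the latter are in force for $E\subset\R^n$ and translating its conclusion from codimension to dimension.

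First, I would note that $X=\R^n$ equipped with Lebesgue measure is Ahlfors $n$-regular, hence in particular doubling, so the standing assumptions on the ambient space made in Section~\ref{s.distance} are satisfied. Next, invoking the characterizations quoted in the paragraph just before the statement, I would use the fact that in an Ahlfors $Q$-regular space one has $\lcodima(E)=Q-\udima(E)$ for all $E\subset X$, and that $E$ is porous if and only if $\udima(E)<Q$ (the latter by \cite[Lemma~3.12]{BHR}). Applied with $Q=n$, the hypothesis $\udima(E)<n$ gives two things at once: the set $E$ is porous, so Corollary~\ref{c.porous} is applicable, and $\lcodima(E)=n-\udima(E)$, which converts the bounds.

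Substituting $\lcodima(E)=n-\udima(E)$ into Corollary~\ref{c.porous}(A) yields the equivalence
\[
w=\delta_E^{-\alpha}\in A_p \iff (1-p)(n-\udima(E))<\alpha<n-\udima(E)\qquad (1<p<\infty),
\]
and into Corollary~\ref{c.porous}(B) yields $w\in A_1$ iff $0\le\alpha<n-\udima(E)$, which is exactly the content of parts (A) and (B) of the corollary to be proved.

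There is essentially no obstacle beyond citing the two equivalences ($\lcodima=n-\udima$ in the regular case and porosity $\Leftrightarrow \udima<n$) and applying Corollary~\ref{c.porous}; the only point one might want to double-check is that the porosity characterization via the Assouad dimension is really valid for arbitrary closed subsets of $\R^n$, but this is exactly \cite[Lemma~3.12]{BHR} (and is elementary from the covering definition of $\udima$ combined with Lebesgue measure estimates on balls). Thus the proof is essentially a one-line specialization.
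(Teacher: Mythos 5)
Your proof matches the paper's own argument: the paper deduces Corollary~\ref{c.a_infty_rn} (via Theorem~\ref{thm.char_intro}) directly from Corollary~\ref{c.porous}, using precisely the two facts you cite, namely that in a $Q$-regular space $\lcodima(E)=Q-\udima(E)$ and that $E$ is porous if and only if $\udima(E)<Q$ (by \cite[Lemma~3.12]{BHR}). The specialization to $X=\R^n$, $Q=n$ is exactly the intended one-line proof.
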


Corollary~\ref{c.a_infty_rn} is also closely related to~\cite[Lemma~3.3]{MR2606245},
which states that if a compact set $\emptyset\not=E\subset \R^n$ is 
a subset of an Ahlfors $\lambda$-regular set, for $0\le \lambda < n$,
and if $1<p<\infty$ and $\alpha\in\R$ are such that
\[
(1-p)(n-\lambda) < \alpha < n-\lambda\,,
\]
then $w=\delta_E^{-\alpha}$ is an $A_p$ weight in $\R^n$.
Recall that a compact set $\emptyset\not=F\subset \R^n$
is Ahlfors $\lambda$-regular if there
is $C\ge 1$ such that
\[
C^{-1} r^{\lambda} \le \Ha^{\lambda}(F\cap B(x,r))\le Cr^{\lambda}
\]
for each $x\in F$ and all $0<r\le \diam(F)$
(or for all $r>0$ if $F$ consists of a single point),
and that then $\udima(F)=\dimh(F)=\lambda$.
In particular, if a closed set $E\subset\R^n$ is a subset of an Ahlfors $\lambda$-regular set, then $\udima(E)\le \lambda$.

Finally, let us note that
Corollary~\ref{c.a_infty_rn} naturally contains the well known results 
for the particular case $E=\{0\}\subset\R^n$, in which $\udima(E)=0$.  
Indeed, let $w(x)=\lvert x\rvert^{-\alpha}$ for $x\in\R^n$.
Then it follows from Corollary~\ref{c.a_infty_rn} that $w\in A_1$ if and only if $0\le \alpha < n$,
and $w\in A_p$, for $1<p<\infty$, if and only if $(1-p)n < \alpha < n$.
These same bounds can be found, for instance, in~\cite[p.~229,~p.~236]{MR869816}.

\section{Boundedness results for Riesz potentials}\label{s.weights}

{\emph{Throughout the remainder of this paper, the following assumptions
are maintained:
\begin{itemize}
\item[(S1)] $X$ is an unbounded metric space,
equipped with a doubling measure $\mu$ such that 
\[\mu(\{x\})=0\,,\qquad \text{ for every }x\in X\,.\]
\item[(S2)] The annulus $B(x,R)\setminus B(x,r)$ is non-empty
for each $x\in X$ and every $0<r<R<\infty$.
\end{itemize}}
We remark that (S1) follows from 
the reverse doubling condition \eqref{reverse doubling}
with any $\eta>0$.
On the other hand, from condition (S2) it follows that 
the radius $\mathrm{rad}(B)=r$ of a ball $B=B(x,r)$ in $X$ is uniquely determined and
inequality $\rad(B_1)\le 2\rad(B_2)$ holds for all balls $B_1\subset B_2\subset X$.
\medskip

When $s>0$, the Riesz potential $\mathcal{I}_sf=\mathcal{I}_s(f)$ of a measurable function $f\ge 0$ is defined by
\begin{equation}\label{d.potential}
\mathcal{I}_s(f)(x) = \int_{X} \frac{f(y)d(x,y)^s}{\mu(B(x,d(x,y)))} \,d\mu(y)\,,\quad x\in X\,.
\end{equation}
Since $\mu(\{x\})=0$ for each $x\in X$, we can tacitly restrict the above integration
to the set $X\setminus \{x\}$ in order to avoid difficulties when $x=y$.

The following theorem gives a sufficient condition for the validity of certain
two weight inequalities for the Riesz potentials, where the weights are powers of
a distance function.

\begin{thm}\label{t.riesz}
Let $s>0$. Assume that 
the reverse doubling condition \eqref{reverse doubling} holds
with the exponent $\eta= s$ and that there is $Q>s$ such that
$\mu(B)\ge c\rad(B)^Q$ for all balls $B\subset X$.

Let $\emptyset \neq E\subset X$ be a closed set, and let  
$1<p\le q\le Qp/(Q-sp)<\infty$ and $\beta\in\R$ be such that
\begin{equation}\label{e.riesz_assumption}
\lcodima(E) > \max \biggl\{ Q - \frac q p (Q-sp+\beta) \, , \, \frac{\beta}{p-1}  \biggr\}\,.
\end{equation}
Then there is a constant $C>0$ such that inequality 
\begin{equation}\label{e.r_weighted}
\biggl(\int_{X} \mathcal{I}_s (f)(x)^q \, \delta_E(x)^{(q/p)(Q-sp+\beta)-Q}\,d\mu(x)\biggr)^{1/q}
\le C\biggl(\int_{X} f(x)^p\,\delta_E(x)^\beta\,d\mu(x)\biggr)^{1/p}
\end{equation}
holds for all measurable functions $f\ge 0$ in $X$.
\end{thm}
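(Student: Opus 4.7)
The plan is to derive Theorem \ref{t.riesz} from the combined P\'erez--Wheeden machinery alluded to in the introduction (namely Theorem \ref{t.r_bounded}): the single--weight Muckenhoupt--Wheeden type reduction pointwise controls $\mathcal{I}_s f$ by a fractional maximal function $M_s f$ in the norm $L^q(u)$ as soon as the target weight $u$ lies in $A_\infty$, and a P\'erez--type two--weight testing condition on balls then yields the $L^p(w)\to L^q(u)$ bound for $M_s$. With this in mind set
\[
u(x) := \delta_E(x)^{(q/p)(Q-sp+\beta)-Q} = \delta_E(x)^{-\alpha_1},\qquad \alpha_1 := Q - \tfrac{q}{p}(Q-sp+\beta),
\]
and $w(x) := \delta_E(x)^\beta$, so that \eqref{e.r_weighted} becomes exactly $\|\mathcal{I}_s f\|_{L^q(u)}\le C\|f\|_{L^p(w)}$.

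First I would verify that $u\in A_\infty$ by appealing to Theorem \ref{t.a_infty}: the first term in the maximum hypothesis is precisely $\alpha_1<\lcodima(E)$, and when $\alpha_1 \ge 0$ part (A) gives $u\in A_1$, while when $\alpha_1<0$ part (B) applied with $\tilde p$ chosen so large that $\alpha_1/(1-\tilde p) < \lcodima(E)$ yields $u \in A_{\tilde p}$. In either case $u\in A_\infty$, legitimizing the reduction to $M_s$. I would then turn to the two--weight testing condition, which in its bumped P\'erez form reads
\[
\sup_{B\subset X} \rad(B)^s\,\mu(B)^{1/q - 1/p}\,\biggl(\vint_B u^\rho\,d\mu\biggr)^{1/(q\rho)}\biggl(\vint_B w^{(1-p')\rho}\,d\mu\biggr)^{1/(p'\rho)} <\infty
\]
for some $\rho>1$. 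The strict inequalities $\alpha_1 < \lcodima(E)$ and $\beta/(p-1) < \lcodima(E)$ combined with the self--improvement Lemma \ref{lemma:aikawa si} produce a common $\rho>1$ for which the Aikawa condition (Remark \ref{rmk:LT}) holds at the exponents $\rho\max\{\alpha_1,0\}$ and $\rho\max\{\beta/(p-1),0\}$; this furnishes, for each ball $B$ with $2B\cap E\ne\emptyset$, the bounds $\bigl(\vint_B u^\rho\,d\mu\bigr)^{1/(q\rho)}\le C\rad(B)^{-\alpha_1/q}$ and $\bigl(\vint_B w^{(1-p')\rho}\,d\mu\bigr)^{1/(p'\rho)}\le C\rad(B)^{-\beta/p}$. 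When $2B\cap E=\emptyset$, instead $\delta_E$ is comparable to $\dist(B,E)$ on $B$ and both averages reduce to the corresponding power of $\dist(B,E)$.

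The main obstacle is the arithmetic verifying that the supremum above is finite. Using $\mu(B)\ge c\,\rad(B)^Q$, which together with $1/q-1/p\le 0$ gives $\mu(B)^{1/q-1/p}\le C\rad(B)^{Q(1/q-1/p)}$, the ``near'' case $2B\cap E\ne\emptyset$ collapses to the pure power $\rad(B)^{\nu}$ with
\[
\nu = s + \frac{Q}{q} - \frac{Q}{p} - \frac{\alpha_1}{q} - \frac{\beta}{p} = 0,
\]
the last equality being a direct substitution of $\alpha_1$. In the complementary ``far'' case the $\beta$--contributions coming from $u$ and $w$ combine into the single power $\dist(B,E)^{Q/p - Q/q - s}$; the Sobolev--type constraint $q\le Qp/(Q-sp)$ forces this exponent to be nonpositive, so that $\dist(B,E)^{Q/p-Q/q-s}\le \rad(B)^{Q/p-Q/q-s}$, and combined with $\rad(B)^s\mu(B)^{1/q-1/p}\le C\rad(B)^{s+Q/q-Q/p}$ one again arrives at a bounded quantity. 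The reverse doubling assumption \eqref{reverse doubling} with exponent $\eta=s$ enters through the P\'erez--Wheeden machinery itself (where it is needed to dominate $\mathcal{I}_s$ by $M_s$) rather than in this final arithmetic step. Feeding the verified hypotheses into Theorem \ref{t.r_bounded} then delivers \eqref{e.r_weighted}.
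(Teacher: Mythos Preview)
Your approach coincides with the paper's: reduce to Theorem~\ref{t.r_bounded}, place the relevant distance weights in Muckenhoupt classes via Theorem~\ref{t.a_infty}, and check the ball-testing condition by splitting into the cases $\dist(B,E)<\rad(B)$ and $\dist(B,E)\ge\rad(B)$; your arithmetic in both cases is correct and matches the paper's computation line for line.

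There is one gap worth flagging. The hypotheses of Theorem~\ref{t.r_bounded} as stated in the paper are \emph{not} your bumped P\'erez condition with $\rho>1$; they are the unbumped condition~\eqref{e.mix} \emph{together with} the requirement that $h:=v^{t/(t-p)}\in\bigcup_{1\le P<\infty}A_P$. With $t=1$ and $v=\delta_E^{\beta}$ this means $h=\delta_E^{-\beta/(p-1)}$ must lie in some $A_P$. Your bumped estimate certainly implies~\eqref{e.mix} by H\"older's inequality, but you never verify this membership for $h$, and that is precisely where the second term $\beta/(p-1)$ in~\eqref{e.riesz_assumption} enters structurally (not merely through the testing arithmetic). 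The paper fills this in by first observing that the hypotheses force $\lcodima(E)>0$ in all cases, and then invoking Theorem~\ref{t.a_infty}: part~(A) when $\beta\ge 0$ (using $\lcodima(E)>\beta/(p-1)$) and part~(C) when $\beta<0$. Once $h\in\bigcup_P A_P$ is established, your bump $\rho>1$ and the appeal to Lemma~\ref{lemma:aikawa si} become unnecessary detours: the strict inequalities in~\eqref{e.riesz_assumption} already give the Aikawa estimate at the unbumped exponents directly from Remark~\ref{rmk:LT}, which is all that~\eqref{e.mix} requires.
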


The optimality of the dimensional assumption~\eqref{e.riesz_assumption} in Theorem~\ref{t.riesz} is discussed below in Remarks~\ref{rem.optimality} and~\ref{rem.opt_HS}. The proof of Theorem \ref{t.riesz}
is based on Theorem~\ref{t.a_infty} and general two weight embedding results for Riesz potentials 
that can be found in the work of P\'erez and Wheeden~\cite{MR1962949}. More specifically,
we need the following Theorem~\ref{t.r_bounded} which is formulated here in
a slightly wider generality that we actually need;
the wider formulation is of possibly independent interest. The proof of Theorem~\ref{t.r_bounded} 
consists mainly of checking that the assumptions for the results in~\cite{MR1962949} are satisfied.  

\begin{thm}\label{t.r_bounded}
Let $s>0$. Assume that 
the reverse doubling condition \eqref{reverse doubling} holds
with the exponent $\eta= s$ and that there is $Q>s$ such that
$\mu(B)\ge c\rad(B)^Q$ for all balls $B\subset X$ with $\rad(B)\ge 1$.
Let $0<t<p\le q<\infty$, and let $w$ and $v$ be weights such that 
\[w\in \bigcup_{1\le P<\infty} A_P\,,\quad \text{ and }\quad h=v^{t/(t-p)}\in \bigcup_{1\le P<\infty} A_P\,.\]

If there exists a constant $K>0$ such that inequality 
\begin{equation}\label{e.mix}
\frac{\rad(B)^sw(B)^{t/q} h(B)^{(p-t)/p}}{\mu(B)} \le K
\end{equation}
holds for all balls $B\subset X$, then
$\mathcal{I}_s$ is bounded
from $L^{p/t}(v\,d\mu)$ to $L^{q/t}(w\,d\mu)$.
\end{thm}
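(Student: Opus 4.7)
The plan is to assemble the theorem from the two-weight theory of P\'erez and Wheeden \cite{MR1962949}, exactly as the authors suggest after the statement. Introduce the auxiliary exponents $\widetilde p=p/t$ and $\widetilde q=q/t$, so that $1<\widetilde p\le \widetilde q<\infty$, and observe that the target estimate is precisely the boundedness of $\mathcal{I}_s$ from $L^{\widetilde p}(v\,d\mu)$ into $L^{\widetilde q}(w\,d\mu)$. A short computation yields $h=v^{1-\widetilde p'}$, where $\widetilde p'=p/(p-t)$ is the H\"older conjugate of $\widetilde p$, and shows that the testing hypothesis \eqref{e.mix} can be rewritten as
\[
\sup_B \frac{\rad(B)^s\,w(B)^{1/\widetilde q}\,h(B)^{1/\widetilde p'}}{\mu(B)}\le K\,,
\]
which is the standard two-weight testing condition used in \cite[Theorem~2.4]{MR1962949} for the fractional maximal operator
\[
M_s f(x)=\sup_{B\ni x}\frac{\rad(B)^s}{\mu(B)}\int_B |f|\,d\mu\,.
\]

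With this set-up in hand, the argument splits into two steps. First, I would apply \cite[Theorem~2.4]{MR1962949} to obtain the two-weight maximal-function bound
\[
\|M_s f\|_{L^{\widetilde q}(w\,d\mu)}\le C\,\|f\|_{L^{\widetilde p}(v\,d\mu)}\,,
\]
which requires the $A_\infty$ membership of $h$---supplied by the hypothesis that $h\in\bigcup_{1\le P<\infty}A_P$---together with the rewritten testing condition and the geometric assumptions on $X$.

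Second, I would invoke \cite[Theorem~2.1]{MR1962949}, which for any $0<r<\infty$ and any $w\in A_\infty$ compares the Riesz potential with its fractional maximal counterpart in the weighted $L^r$-norm. Applied with $r=\widetilde q$, and using $w\in\bigcup_{1\le P<\infty}A_P\subset A_\infty$, this yields
\[
\|\mathcal{I}_s f\|_{L^{\widetilde q}(w\,d\mu)}\le C\,\|M_s f\|_{L^{\widetilde q}(w\,d\mu)}\,.
\]
Chaining this with the previous display completes the proof.

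The main obstacle I anticipate is the careful verification that our ambient geometric hypotheses line up with the standing assumptions of \cite[Theorems~2.1 and 2.4]{MR1962949}. The reverse doubling condition \eqref{reverse doubling} with exponent $\eta=s$ matched to the fractional order is precisely what makes the norm comparison between $\mathcal{I}_s$ and $M_s$ valid at arbitrary scales, while the mass lower bound $\mu(B)\ge c\,\rad(B)^Q$ for $\rad(B)\ge 1$ with $Q>s$ is what controls the behavior of $\mathcal{I}_s$ on large balls and ensures finiteness of the defining integral. Once this bookkeeping is performed, no further analytic work is required: the $A_\infty$ memberships of both $w$ and $h$ are built directly into the hypotheses, and the rewriting of \eqref{e.mix} is a purely algebraic manipulation.
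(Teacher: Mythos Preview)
Your outline is correct and matches the paper's approach exactly: dominate $\mathcal{I}_s$ by the fractional maximal operator $M_\psi$ (with $\psi(B)=\rad(B)^s/\mu(B)$) via \cite[Theorem~2.1]{MR1962949}, then bound $M_\psi$ via \cite[Theorem~2.4]{MR1962949} using the testing condition~\eqref{e.mix} rewritten with exponents $\widetilde p,\widetilde q$. One point you slightly underestimate in your last paragraph: the $A_\infty$ hypothesis on $h$ required by \cite[Theorem~2.4]{MR1962949} is a \emph{dyadic} $A_\infty$ condition relative to the functional $\psi^{-1}(B)=\mu(B)\rad(B)^{-s}$, not the ordinary $A_\infty(\mu)$, so it is not quite ``built directly into the hypotheses''; the paper deduces it from $h\in\bigcup_P A_P$ by passing through the Hausdorff content $\Ha_{2\rad(B)}^{\mu,s}$, and likewise several kernel and growth conditions for \cite[Theorem~2.1]{MR1962949} (including the comparison $\varphi\approx\psi$ and the limit $\psi(B)\to 0$ as $\rad(B)\to\infty$, which is where the lower bound $\mu(B)\ge c\,\rad(B)^Q$ enters) need to be verified from (S2), doubling, and reverse doubling with $\eta=s$.
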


\begin{proof}
Fix $f\in L^{p/t}(v\,d\mu)$.
We shall first apply \cite[Theorem 2.1]{MR1962949} that is a metric space generalization of the Euclidean result of \cite{MW}.
The former result implies that inequality
\begin{equation}\label{IandM}
\bigg( \int_{X} ( \mathcal{I}_s f)^{q/t} w \,d\mu\bigg)^{t/q} 
 \le C \bigg(\int_{X} (M_\psi f)^{q/t}\,w\,d\mu\bigg)^{t/q}
\end{equation}
holds with $C>0$ independent of $f$, where $\psi(B)=\rad(B)^s/\mu(B)$ 
for all balls $B\subset X$ and the generalized maximal function  $M_\psi f$ at $x\in X$ is defined by
$M_\psi f(x)=\sup_{B\ni x}\psi(B)\int_B|f|\,d\mu$; i.e.,
the supremum is taken over all balls $B\subset X$ that contain the point $x$.
Now, the assumptions of 
\cite[Theorem 2.1]{MR1962949} are satisfied by the following several facts.
Their somewhat tedious but straight-forward  
proofs are merely indicated below and details are left to the interested reader.

From \eqref{d.potential} we see that the Riesz potential $\mathcal I_s$
is an integral operator (transform) associated with the kernel function 
\[K(x,y)=\frac{d(x,y)^s}{\mu(B(x,d(x,y)))}\,,\qquad x,y\in X\,,\quad x\not=y\,.\]
Using both doubling \eqref{e.doubling_quant} and reverse doubling \eqref{reverse doubling} conditions of $\mu$,
the latter with $\eta= s$, one can check
that for every number $c_2> 1$ there
exists $c_1>1$ such that $K(x,y)\le c_1 K(x',y)$ if $0<d(x',y)\le c_2\,d(x,y)$ and $K(x,y)\le c_1 K(x,y')$ if $0<d(x,y')\le c_2\, d(x,y)$.
We recall from (S2) that the annuli of $X$ are non-empty. Using
this and the doubling condition of $\mu$ one shows that for every
$0<c<1$ there exists  $\Lambda>1$ such that
\begin{equation}\label{e.comparison}
\Lambda^{-1}\,\psi (B)\le  \varphi(B):=\sup\{K(x,y)\,:\,x,y\in B\text{ and }d(x,y)\ge c\rad(B)\}\le \Lambda\, \psi (B)
\end{equation}
whenever $B=B(x_B,\rad(B))\subset X$. Inequality \eqref{e.comparison} shows
that (for a fixed $0<c<1$) the maximal function $M_\psi f$ is pointwise
comparable with $M_\varphi f\colon x\mapsto \sup_{B\ni x} \varphi(B)\int_B \lvert f\rvert\,d\mu$ that appears in  \cite[Theorem 2.1]{MR1962949}.
The penultimate fact is that  $(\mu,\varphi)$ satisfies 
\cite[(16)(a)--(c)]{MR1962949} with $\tau(B)=\mu(B)$ for all balls $B\subset X$;
here one applies the doubling condition of $\mu$ and the estimates in \eqref{e.comparison}. 
The final fact required for \eqref{IandM} is that $w$ belongs to a certain ``dyadic $A_\infty$-class''
$A^{\text dy}_\infty(\mu)$.
In order to verify this condition, we refer to \cite[p.~14]{MR1962949}
and recall that the assumption $w\in \bigcup_{1\le P<\infty} A_P$ implies the
strong doubling condition \eqref{e.sdoubling} for some exponent $P\ge 1$.
Hence, we can conclude that inequality \eqref{IandM} holds.

The second step of the proof is to show
that the right-hand side of \eqref{IandM} is, in turn, 
dominated by the $L^{p/t}(v\,d\mu)$-norm of $f$,  i.e., there is $C>0$ such that
\begin{equation}\label{Mandf}
\bigg(\int_{X} (M_\psi f)^{q/t} w\,d\mu\bigg)^{t/q}
\le C\bigg(\int_{X} \lvert f\rvert^{p/t}v\,d\mu\bigg)^{t/p}.
\end{equation}
This inequality follows from \cite[Theorem 2.4]{MR1962949}, but again 
the validity of the assumptions of this theorem needs to be checked.
We will now go briefly through the necessary facts.

First, the assumed inequality \eqref{e.mix} is required in \cite[Theorem 2.4]{MR1962949}. 
The doubling condition $\psi(2B)\le 2^s\psi(B)$ for all balls $B$ is trivially valid.
The requirements \cite[(23)(a)--(b)]{MR1962949}
are the following growth conditions on balls:
\[
\psi(B_1)\le c_1\psi(B_2) \text{ if } B_1\subset B_2\subset c_2B_1 
\quad\text{ and }\quad 
\psi(B_1)\mu(B_1)\le c_1\psi(B_2)\mu(B_2) \text{ if } B_1\subset B_2.
\]
An easy application of
the doubling property of $\mu$ and inequality
$\rad(B_1)\le 2\rad(B_2)$ yields these two growth conditions.
The requirement $\lim_{\rad(B)\to \infty} \psi(B)=0$ that appears in \cite[(23)(c)]{MR1962949}
follows from the assumption that $\mu(B)\ge c\rad(B)^Q$ for all balls $B\subset X$ with $\rad(B)\ge 1$.

The last fact that we need for \eqref{Mandf} is that $h=v^{t/(t-p)}$
belongs to the 
``dyadic $A_\infty$-class'' $A^{\text dy}_\infty(\psi^{-1})$.
To this end, let us first observe that $h\,d\mu$ is a 
doubling measure, see inequality \eqref{e.sdoubling}.
The doubling and reverse doubling (with $\eta= s$) properties 
\eqref{e.doubling} and \eqref{reverse doubling} of $\mu$ imply that 
the functional $\tau=\psi^{-1}$ on balls satisfies
conditions \cite[(16)(a)--(b)]{MR1962949}.
Fix a ball $B$ and a measurable set $E\subset B$. Using our assumptions
and the inclusions \eqref{ap_relations},
we find that $h\in \bigcup_{1\le P<\infty} A_P\subset A_\infty$, and thus
\begin{equation}\label{A infty for h}
\frac{h(E)}{h(B)} 
\le C \biggl(\frac{\mu(E)}{\mu(B)}\biggr)^\delta
\le C \biggl(\frac{\rad(B)^s\Ha_{2\rad(B)}^{\mu,s}(E)}{\mu(B)}\biggr)^\delta
=C \biggl(\frac{\Ha_{2\rad(B)}^{\mu,s}(E)}{\psi^{-1}(B)}\biggr)^\delta\,.
\end{equation}
Finally, since inequality $\rad(B_j)\le 2 \rad(B)$ holds if
$B_j\subset B$ is a ball, the above
considerations show that indeed $h\in A^{\rm dy}_\infty(\psi^{-1})$;
we refer to \cite[pp.~13--15]{MR1962949} for details. 
Hence all the assumptions of \cite[Theorem 2.4]{MR1962949} are satisfied
by the above facts, and so the desired inequality \eqref{Mandf} follows from this theorem.
\end{proof}

\begin{proof}[Proof of Theorem \ref{t.riesz}]
First we note that it follows from the assumptions
that $\lcodima(E)>0$.
Indeed, if $\beta\ge 0$, this readily follows from $\lcodima(E)>\beta/(p-1)$. If
$\beta<sp-Q$, then the assumption would yield that $\lcodima(E)>Q>0$, and finally if
$sp-Q\le \beta <0$, then using the assumption $q\le Qp/(Q-sp)<\infty$ we obtain that
\[
\lcodima(E) >  Q - \frac q p (Q-sp+\beta) \ge Q - \frac Q {Q-sp} (Q-sp+\beta) > 0.
\]  
Since $\lcodima(E)>0$, it follows that $\mu(E)=0$.

For $x\in X$ we write 
\[
w(x) = \delta_{E}(x)^{(q/p)(Q-sp+\beta)-Q}\,,\qquad  v(x)=\delta_E(x)^\beta\,,\qquad h(x)=\delta_{E}(x)^{-\beta/(p-1)}\,.
\] 
Then $w$, $v$, and $h$ are all weights that belong to
the union $\bigcup_{1\le P<\infty} A_P$ of Muckenhoupt classes; 
this follows from a straight-forward calculation using  
the assumptions and Theorem~\ref{t.a_infty},
and considering the 
cases $\beta\ge 0$ and $\beta <0$ separately.

Hence it suffices to show that 
there is a constant $K>0$ such that inequality
\begin{equation}\label{e.wanted}
w(B)^{1/q} h(B)^{(p-1)/p} \le K\,\mathrm{rad}(B)^{-s}\mu(B)
\end{equation}
holds for all balls $B$ in $X$; then inequality~\eqref{e.r_weighted}
follows from Theorem~\ref{t.r_bounded} (case $t=1$).

To this end, let us
fix a ball $B=B(x_0,r)\subset X$. 
Consider first the case $\dist(B,E) <\rad(B)= r$.
Then $B\subset B(x,3r)$ for some $x\in E$.
Hence, by the fact that 
$\lcodima(E) >  Q - \frac q p (Q-sp+\beta)$,  
Remark~\ref{rmk:LT},
and the doubling condition, we obtain
\begin{align*}
w(B)^{p/q} &\le \biggl(\int_{B(x,3r)} \delta_{E}(y)^{(q/p)(Q-sp+\beta)-Q}\,d\mu(y)\biggr)^{p/q}
\le C r^{Q-sp+\beta-Qp/q}\mu(B)^{p/q}\,.
\end{align*}
Likewise, since $\lcodima(E) >  \beta/(p-1)$, we have
\begin{align*}
h(B)^{p-1} &\le  \biggl( \int_{B(x,3r)} \delta_{E}(y)^{-\beta/(p-1)} \,d\mu(y)\biggr)^{p-1} 
\le Cr^{-\beta}\mu(B)^{p-1}\,,
\end{align*}
and thus
\[
w(B)^{p/q} h(B)^{p-1} \le  
C r^{Q-sp+\beta-Qp/q}\mu(B)^{p/q} r^{-\beta}\mu(B)^{p-1} 
=C\biggl(\frac{r^Q}{\mu(B)}\biggr)^{1-p/q} \biggl(\frac{\mu(B)}{r^s}\biggr)^{p}\,.
\]
By the assumptions we have $\frac{r^Q}{\mu(B)}\le c$ and $p\le q$, 
and so inequality~\eqref{e.wanted} follows 
for all balls $B$ satisfying $\dist(B,E) < \rad(B)$.

Let us then assume that 
$\dist(B,E)\ge \rad(B)=r$. Then it holds for all $y\in B$ that
\[
 \delta_{E}(y)/3 \le  \dist(B,E) \le \delta_{E}(y)\,,
\]
and thus we may estimate 
\begin{align*}
w(B)^{p/q} h(B)^{p-1} 
&\le C \mu(B)^{p/q} \dist(B,E)^{Q-sp+\beta - Qp/q} \mu(B)^{p-1} \dist(B,E)^{-\beta}\\
& \le C \mu(B)^{p/q+p-1} \dist(B,E)^{Q-sp-Qp/q}\,.
\end{align*}
By assumption $Q-sp-Qp/q\le 0$, and so
\[
w(B)^{p/q} h(B)^{p-1} \le C\mu(B)^{p/q+p-1} r^{Q-sp-Qp/q} 
=C\biggl(\frac{r^Q}{\mu(B)}\biggr)^{1-p/q} \biggl(\frac{\mu(B)}{r^s}\biggr)^{p}.
\]
The claim now follows as in the above case $\dist(B,E) < \rad(B)$, 
and this concludes the proof.
\end{proof}

\section{Fractional Hardy--Sobolev inequalities}\label{s.fractional}

{\em{Recall our standing assumptions (S1)--(S2) concerning the
space $X$ from the beginning of \S\ref{s.weights}}.}

\medskip

We now turn to the applications of the general embeddings established in the
previous Section~\ref{s.weights}. We begin with the fractional Hardy--Sobolev 
inequalities, since these require less assumptions on the space than their classical (i.e.\ non-fractional) counterparts which, in turn, will be considered in Section~\ref{s.application}.

The main result of this section is Theorem \ref{t.hardy_Fract} that
gives a sufficient condition for the validity of fractional Hardy--Sobolev inequalities
in a metric space $X$. Even though connectivity of $X$ is not required 
due to the non-locality of these inequalities, we nevertheless need {\em some}
further structural assumptions. A suitable assumption is given by the following chain condition.

\begin{defn}\label{d.chain}
Let $\lambda \ge 1$.
We say that the space $X$ satisfies the {\em $\lambda$-chain condition}, if there
is  a constant $M\ge 1$ such that for each $x\in X$ and all $0<r<R$ 
there is a sequence of balls $B_0, B_1, B_2, \ldots, B_k$ for some integer $k$ with
the following conditions (A)--(D):
\begin{itemize}
\item[(A)] $\lambda B_0\subset X\setminus B(x,R)$ and $\lambda B_k\subset B(x,r)$,
\item[(B)] $M^{-1}\diam(\lambda B_i)\le \dist(x,\lambda B_i)\le M\diam(\lambda B_i)$
for $i=0,1,2,\ldots,k$,
\item[(C)] there is a ball $R_i\subset B_i\cap B_{i+1}$ such
that $B_i\cup B_{i+1}\subset MR_i$ for
$i=0,1,2,\ldots,k-1$,
\item[(D)] no point of $X$ belongs to more than $M$
balls $\lambda B_i$.
\end{itemize}
\end{defn}

\begin{rem}\label{r.con_chain}
If $X$ is connected, then it satisfies the $\lambda$-chain condition for all $\lambda\ge 1$, see
\cite[p.~541]{MR2569546} and \cite[p.~30]{HjK}.
Let us emphasize that $X$ need not, however, be connected
in order to satisfy a chain condition. For instance
the space $\R^n\setminus \{\lvert x\rvert=1\}$,
$n\in\N$,
equipped with the Euclidean metric and the Lebesgue measure,
is disconnected but still satisfies the $\lambda$-chain condition for all $\lambda\ge 1$ (as well as our standing assumptions (S1) and (S2)).
\end{rem}

\begin{thm}\label{t.hardy_Fract}
Let $0<s<1$. Assume that $X$ satisfies the $1$-chain condition, that 
the reverse doubling condition \eqref{reverse doubling} holds
with the exponent $\eta= s$, and that there is $Q>s$ such that
$\mu(B)\ge c\rad(B)^Q$ for all balls $B\subset X$.

Let $\emptyset \neq E\subset X$ be a closed set, and let  
$1<p\le q\le Qp/(Q-sp)<\infty$ and $\beta\in\R$ be such that
\begin{equation}\label{e.hardy_Fract_assumption}
\lcodima(E) > \max \biggl\{ Q - \frac q p (Q-sp+\beta) \, , \, \frac{\beta}{p-1}  \biggr\}\,.
\end{equation}
Then, if $1\le t <\infty$, there is a constant $C>0$ such that
the fractional Hardy--Sobolev inequality
\begin{equation}\label{e.weighted_Fract}
\begin{split}
\bigg(\int_X &\lvert f(x)\rvert^q \delta_{E}(x)^{(q/p)(Q-s p+\beta)-Q}\,d\mu(x)\bigg)^{1/q}
\\&\qquad\le C \bigg(\int_X \bigg(\int_{X}  \frac{\lvert f(y)-f(z)\rvert^t}{d(y,z)^{st} \mu(B(y,d(y,z)))}\,d\mu(z)\,\bigg)^{p/t}
\delta_{E}^\beta(y)\,d\mu(y)\bigg)^{1/p}
\end{split}
\end{equation}
holds whenever $f\in\Lip_0(X)$.
\end{thm}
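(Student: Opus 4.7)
The plan is to derive the pointwise inequality
\[
|f(x)| \le C\,\mathcal{I}_s(g)(x) \quad\text{for }\mu\text{-a.e.\ }x\in X,
\]
where
\[
g(y) = \biggl(\int_{X} \frac{|f(y)-f(z)|^t}{d(y,z)^{st}\mu(B(y,d(y,z)))}\,d\mu(z)\biggr)^{1/t}
\]
is exactly the ``fractional gradient'' appearing on the right-hand side of \eqref{e.weighted_Fract}. Once this pointwise bound is established, the conclusion follows immediately by applying Theorem~\ref{t.riesz} to the nonnegative function $g$, since the hypotheses on $p,q,\beta$ and the codimension assumption \eqref{e.hardy_Fract_assumption} match those of Theorem~\ref{t.riesz} verbatim.

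The first preparatory step is a fractional Poincar\'e estimate. For a ball $B\subset X$ of radius $r$ and any $y\in B$, H\"older's inequality with exponents $t$ and $t/(t-1)$ (the case $t=1$ being direct), together with the trivial bounds $d(y,z)\le 2r$ and $\mu(B(y,d(y,z)))\le C\mu(B)$ for $z\in B$ (by doubling), yields
\[
\int_B |f(y)-f(z)|\,d\mu(z) \le C r^s \mu(B)\,g(y),
\]
and hence $\vint_B |f-f_B|\,d\mu \le C r^s \vint_B g\,d\mu$ by Fubini.

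The second step exploits the $1$-chain condition. Because $f\in\Lip_0(X)$, there is a bounded set outside of which $f\equiv 0$. For $\mu$-a.e.\ $x\in X$, for each sufficiently small $r>0$ and sufficiently large $R>0$, Definition~\ref{d.chain} provides a chain $B_0,\ldots,B_k$ with $f_{B_0}=0$ (by (A), taking $R$ large enough) and $f_{B_k}\to f(x)$ as $r\to 0$ (by Lebesgue differentiation). Property (C) lets us bound $|f_{B_{i+1}}-f_{B_i}|$ by passing through the intermediate ball $MR_i$, and the Poincar\'e estimate above gives
\[
|f_{B_{i+1}}-f_{B_i}|\le C\,\rad(B_i)^s \vint_{MR_i} g\,d\mu.
\]
Thus $|f(x)|$ is dominated by the telescoping sum $\sum_{i} \rad(B_i)^s \vint_{MR_i} g\,d\mu$.

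The third step recognizes this sum as (a constant times) $\mathcal{I}_s(g)(x)$. Condition (B) of Definition~\ref{d.chain} guarantees that for each $y\in MR_i$, the scales $\rad(B_i)$ and $d(x,y)$ are comparable, and doubling then gives $\mu(B_i)\approx \mu(B(x,d(x,y)))$; consequently
\[
\rad(B_i)^s \vint_{MR_i} g\,d\mu \le C\int_{MR_i} \frac{d(x,y)^s\,g(y)}{\mu(B(x,d(x,y)))}\,d\mu(y).
\]
The bounded overlap property (D) allows summing in $i$ to obtain $\sum_i \rad(B_i)^s\vint_{MR_i} g\,d\mu \le C\,\mathcal{I}_s(g)(x)$, completing the pointwise bound. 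The main obstacle is precisely this geometric step: one must carefully verify that the conditions in Definition~\ref{d.chain} deliver both the scale comparability and the finite overlap needed to collapse the chain into a Riesz potential. This is the standard (but somewhat intricate) Haj{\l}asz--Koskela type maneuver, and the $1$-chain condition has been formulated exactly to make it go through; once this is in place, Theorem~\ref{t.riesz} closes the argument.
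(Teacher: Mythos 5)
Your proposal is correct and follows the same strategy as the paper: the pointwise bound $|f(x)|\le C\,\mathcal{I}_s(g)(x)$ is precisely the paper's Lemma~\ref{l.Mak}, established by the same chain-and-fractional-Poincar\'e argument (the paper uses a power-mean inequality where you use H\"older, which is equivalent), after which Theorem~\ref{t.riesz} finishes the proof. One small precision worth noting: in collapsing the chain into $\mathcal{I}_s(g)(x)$ you integrate over $MR_i$ and invoke condition (D), but (D) only controls the overlap of the balls $B_i$, not of the dilations $MR_i$; the paper avoids this by using $|f_{B_i}-f_{R_i}|\le C\vint_{B_i}|f-f_{B_i}|\,d\mu$ and then applying both the Poincar\'e estimate and the comparison $\rad(B_i)^s/\mu(B_i)\lesssim d(x,y)^s/\mu(B(x,d(x,y)))$ directly on $B_i$.
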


We note that the case $t=q=p$ of inequality~\eqref{e.weighted_Fract} is just the (weighted) fractional Hardy inequality; see e.g.~\cite{MR3277052} and~\cite{MR3237044} for the Euclidean and metric versions of such inequalities, respectively. The case $\beta=0$ and $t=p$, i.e., non-weighted fractional Hardy--Sobolev inequality, is considered in the Euclidean case in~\cite{MR3343059}. Theorem~\ref{t.hardy_Fract}, in contrast, allows for the weighted fractional Hardy--Sobolev inequalities, and it pertains to the context of metric measure spaces.

Theorem \ref{t.hardy_Fract} follows from
Theorem \ref{t.riesz} and  Lemma \ref{l.Mak} below.
The proof of this lemma, in turn, is a modification of
\cite[Theorem 3.2]{MR2569546}.

Recall that the Riesz potential $\mathcal{I}_s$ is defined by~\eqref{d.potential}.

\begin{lem}\label{l.Mak}
Let $0<s<1$. Assume that $X$ satisfies the $1$-chain condition and that 
the reverse doubling condition \eqref{reverse doubling} holds
with the exponent $\eta= s$.

Then, if $1\le t <\infty$, we have for all $f\in \Lip_0(X)$ and all $x\in X$ that 
\begin{equation}\label{e.estim}
\lvert f(x)\rvert \le C\,\mathcal{I}_s(g)(x)\,,
\end{equation}
where $C>0$ is independent of both $f$ and $x$, and we have denoted
\begin{equation*}\label{d.g_function}
g(y)=\biggl( \int_{X}\frac{\vert f(y)-f(z)\vert^t}{d(y,z)^{st}\mu(B(y,d(y,z)))}\,d\mu(z)\biggr)^{1/t}
\end{equation*}
for every $y\in X$.
\end{lem}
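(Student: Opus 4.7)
The plan is a Hedberg-type telescoping argument along a chain of balls provided by the $1$-chain condition, combined with a fractional Poincar\'e-type estimate on those balls. Fix $x\in X$ and, since $f\in\Lip_0(X)$, choose $R>0$ with $f\equiv 0$ outside $B(x,R)$. For each $0<r<R$, apply the $1$-chain condition to obtain balls $B_0,\dots,B_k$ (with $k=k_r$) satisfying (A)--(D). By (A) we have $B_0\subset X\setminus B(x,R)$, so $f_{B_0}=0$, while $B_k\subset B(x,r)$ together with the Lipschitz continuity of $f$ yields $f_{B_k}\to f(x)$ as $r\to 0^+$. Hence it suffices to bound $\sum_{i=0}^{k-1}|f_{B_i}-f_{B_{i+1}}|$ uniformly in $r$. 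Using the intermediate ball $R_i$ from (C) together with the doubling property ($R_i\subset B_i\cap B_{i+1}$ and $B_i\cup B_{i+1}\subset MR_i$ give $\mu(B_i),\mu(B_{i+1})\le C\mu(R_i)$), this reduces to controlling sums of averages of the form $\vint_{B_i}|f-f_{B_i}|\,d\mu$.

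The key technical ingredient is a fractional Poincar\'e estimate: for any ball $B$ of radius $r_B$ and almost every $y\in B$,
\begin{equation*}
|f(y)-f_B|\le C\,r_B^s\, g(y).
\end{equation*}
This follows from Jensen's inequality (which uses $t\ge 1$) to write $|f(y)-f_B|^t\le \vint_B|f(y)-f(z)|^t\,d\mu(z)$, combined with the observation that for $y,z\in B$ one has $d(y,z)<2r_B$ and $B(y,d(y,z))\subset 3B$, whence the doubling property yields $d(y,z)^{st}\mu(B(y,d(y,z)))\le Cr_B^{st}\mu(B)$. Factoring this out of the integrand defining $g(y)^t$ gives the claimed pointwise bound, and averaging produces $\vint_B|f-f_B|\,d\mu\le Cr_B^s\vint_B g\,d\mu$.

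To turn the resulting sum into a Riesz potential, note that by chain property (B) together with (S2) (which forces $\rad(B_i)\le\diam(B_i)\le 2\rad(B_i)$), one has $d(x,y)\approx r_i:=\rad(B_i)$ uniformly for $y\in B_i$, and the doubling property then gives $\mu(B(x,d(x,y)))\approx\mu(B_i)$. Consequently
\begin{equation*}
r_i^s\vint_{B_i}g\,d\mu\le C\int_{B_i}\frac{d(x,y)^s\,g(y)}{\mu(B(x,d(x,y)))}\,d\mu(y).
\end{equation*}
Summing in $i$ and invoking the bounded-overlap property (D) dominates the total by $CM\,\mathcal{I}_s(g)(x)$, uniformly in $r$. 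Letting $r\to 0^+$ then yields $|f(x)|\le C\mathcal{I}_s(g)(x)$.

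The main obstacle will be the fractional Poincar\'e step, where the interplay between the kernel $d(y,z)^{st}\mu(B(y,d(y,z)))$ hidden inside $g$ and the geometry of the ball $B$ must be handled carefully; the hypothesis $t\ge 1$ is essential there, since Jensen is needed precisely in the direction that pulls the differences $|f(y)-f(z)|$ into the defining integral for $g(y)$. The remaining ingredients -- the telescoping, the pairwise averaging across the overlapping balls $R_i$, and the bounded-overlap summation -- are standard Hedberg-type manipulations that follow mechanically from the chain condition and the doubling property, and in fact the reverse doubling assumption of the ambient space is not used within this particular lemma.
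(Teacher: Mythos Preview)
Your proof is correct and follows the same chaining strategy as the paper: telescope along the chain, reduce to oscillations $\vint_{B_i}|f-f_{B_i}|\,d\mu$ via condition (C), establish the fractional Poincar\'e bound $\vint_B|f-f_B|\,d\mu\le Cr_B^s\vint_B g\,d\mu$, convert each term into a piece of the Riesz potential using condition (B), and sum via the bounded overlap (D). The only cosmetic difference in the skeleton is that you let $r\to 0^+$ and use Lipschitz continuity to kill the endpoint term, whereas the paper fixes a single $r$ with $|f(x)-f_{B_k}|\le|f(x)|/2$ and absorbs it into the left-hand side.

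Your final remark, however, goes slightly beyond the paper and is worth recording. The paper invokes the reverse doubling condition (with $\eta=s$) to obtain $\mu(B(x,d(x,y)))/\mu(\kappa B_i)\le C(d(x,y)/r_i)^s$ for $y\in B_i$. You observe instead that condition (B) already forces $d(x,y)\approx r_i$ for $y\in B_i$, whence doubling alone gives $\mu(B(x,d(x,y)))\approx\mu(B_i)$ and hence the needed comparison $r_i^s/\mu(B_i)\le C\,d(x,y)^s/\mu(B(x,d(x,y)))$. This is correct, so your argument actually dispenses with the reverse doubling hypothesis and proves a marginally stronger statement than the one in the paper.
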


\begin{proof}
Fix a function $f\in \Lip_0(X)$ and $x\in X$;
we may clearly assume that $f(x)\not=0$. 
Since $f\in \Lip_0(X)$, we can choose $R>0$ such that $f=0$ on $X\setminus B(x,R)$. 
Fix also $0<r<R$ such that
$\lvert f(x)-f(y)\rvert\le \lvert f(x)\rvert/2$
for each $y\in B(x,r)$.
The $1$-chain condition, applied with the point $x$ and $0<r<R$, now gives  
a sequence of balls $B_0, B_1, B_2,\ldots, B_k$
satisfying conditions (A)--(D) in Definition \ref{d.chain}.

By condition (A) and the fact that $f$ vanishes in $B_0\subset X\setminus B(x,R)$,
we obtain
\begin{align*}
\lvert f(x)\rvert=\vert f(x)-f_{B_0}\vert
&\le\sum_{i=0}^{k-1}\vert f_{B_{i+1}}-f_{B_{i}}\vert + \lvert f(x)-f_{B_k}\rvert\,.
\end{align*}
The above choice of $r>0$ and condition (A) together imply
that $\lvert f(x)-f_{B_k}\rvert\le \lvert f(x)\rvert/2$. Thus, by condition (C),
\begin{align*}
\lvert f(x)\rvert &\le 2\sum_{i=0}^{k-1}\vert f_{B_{i+1}}-f_{B_{i}}\vert
\le 2\sum_{i=0}^{k-1}\bigl(\vert f_{B_{i+1}}-f_{R_i}\vert
+\vert f_{B_{i}}-f_{R_i}\vert\bigr)\\
& \le C\sum_{i=0}^{k}
\vint_{B_i}\vert f(y)-f_{B_i}\vert\,d\mu(y)\,.
\end{align*}
Let us next consider a fixed $i=0,\ldots,k$. First, we have
\begin{equation*}\label{ball_est}
\begin{split}
\vint_{B_i} &\vert f(y)-f_{B_i}\vert\,d\mu(y)
\\&=\vint_{B_i}
\bigg\vert 
\frac{1}{\mu(B_i)}
\int_{B_i} ( f(y)-f(z))\,d\mu(z)\bigg\vert\,d\mu(y)\\
&\le \frac{1}{\mu(B_i)}
\int_{B_i}
\biggl(\frac{1}{\mu(B_i)} \int_{B_i}\vert f(y)-f(z)\vert^t\,d\mu(z)\biggr)^{1/t}\,d\mu(y)\\
&\le
\frac{C(2\rad(B_i))^s}{\mu(B_i)}
\int_{B_i}
\biggl( \int_{B_i}\frac{\vert f(y)-f(z)\vert^t}{(2\rad(B_i))^{st}\mu(4B_i)}\,d\mu(z)\biggr)^{1/t}\,d\mu(y)\\
&\le \frac{C\rad(B_i)^s}{\mu(B_i)}
\int_{B_i}
\biggl( \int_{B_i}\frac{\vert f(y)-f(z)\vert^t}{d(y,z)^{st}\mu(B(y,d(y,z)))}\,d\mu(z)\biggr)^{1/t}\,d\mu(y)\,.
\end{split}
\end{equation*}
Let us also fix $y\in B_i$. By condition (B),
\[
B(x,d(x,y))\subset \kappa B_i \,.
\]
where $\kappa =C(M)\ge 1$.
Hence, by the doubling~\eqref{e.doubling_quant} and
reverse doubling~\eqref{reverse doubling} conditions,
\begin{align*}
\frac{\mu(B(x,d(x,y)))}{\mu(B_i)}
\le C(\kappa,C_D)\frac{\mu(B(x,d(x,y)))}{\mu(\kappa B_i)}
\le C(\kappa,C_D,C^*)\bigg(\frac{d(x,y)}{\rad(B_i)} \bigg)^s\,.
\end{align*}
Combining the above estimates, we obtain
\begin{align*}
\sum_{i=0}^{k}
\vint_{B_i}\vert f(y)-f_{B_i}\vert \,d\mu(y)
&\le C\sum_{i=0}^{k}\int_{B_i}\frac{g(y)d(x,y)^s}{\mu(B(x,d(x,y)))}\,d\mu(y)\,.
\end{align*}
Finally, by condition (D),
\begin{equation}
\begin{split}
\vert f(x)\vert &\le C\int_{X}\frac{g(y)d(x,y)^s}{\mu(B(x,d(x,y)))}\,d\mu(y) =C\,\mathcal{I}_{s}( g)(x)\,.
\end{split}
\end{equation}
Inequality \eqref{e.estim} follows, and this concludes the proof.
\end{proof}

The next proposition shows that under some  (relatively mild)  additional assumptions,
namely Ahlfors regularity, $\lcodima(E)>0$,  $\beta \ge 0$ and $t\le q$,
the first term on the right-hand side of
\eqref{e.hardy_Fract_assumption} in Theorem~\ref{t.hardy_Fract} is optimal.

\begin{prop}\label{p.weighted_Fract_optimality}
Let $0<s<1$ and assume that $X$ is Ahlfors $Q$-regular with $Q>s$, i.e., there exists $C_A\ge 1$ such that
$C_A^{-1}r^Q \leq \mu(B(x,r))\le C_A r^Q$ for each $x\in X$ and every $0<r<\infty$. 

Let $\emptyset\not=E\subset X$ be a closed set such that $\lcodima(E)>0$,
and let $1<p\le q\le Qp/(Q-sp)<\infty$
and $\beta\ge 0$. Assume that for some $1\leq t\leq q$ and $C_{H}>0$ the fractional Hardy--Sobolev inequality
\begin{equation*}
\begin{split}
\bigg(\int_X &\lvert f(x)\rvert^q \delta_{E}(x)^{(q/p)(Q-s p+\beta)-Q}\,d\mu(x)\bigg)^{1/q}
\\&\qquad\le C_{H} \bigg(\int_X \bigg(\int_{X}  \frac{\lvert f(y)-f(z)\rvert^t}{d(y,z)^{st} \mu(B(y,d(y,z)))}\,d\mu(z)\,\bigg)^{p/t}
\delta_{E}^\beta(y)\,d\mu(y)\bigg)^{1/p}
\end{split}
\end{equation*}
holds for all functions $f\in\Lip_0(X)$.
Then
\[ 
  \lcodima(E) >  Q - \frac q p (Q-sp+\beta).
\] 
\end{prop}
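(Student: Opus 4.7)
The plan is to test the assumed fractional Hardy--Sobolev inequality against a one-parameter family of Lipschitz cutoffs centered at a point $x_0\in E$ and to read off from the result the (lower) Aikawa condition for $E$ at the exponent $\rho:=Q-(q/p)(Q-sp+\beta)$. The strict lower bound $\lcodima(E)>\rho$ is then obtained by invoking the self-improvement of the Aikawa condition via Lemma~\ref{lemma:aikawa si}.

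If $\rho\le 0$ the claim is immediate from $\lcodima(E)>0$, so I assume $\rho>0$, equivalently $\gamma:=(q/p)(Q-sp+\beta)-Q=-\rho<0$ and $q<Qp/(Q-sp+\beta)$. Fix $x_0\in E$, $R>0$, and $0<r<R/2$, and plug
\[
f_R(y):=\max\bigl\{0,\,1-d(x_0,y)/R\bigr\}\in\Lip_0(X)
\]
into the inequality. Since $f_R\ge 1/2$ on $B(x_0,R/2)$, and since $\gamma<0$ together with $\delta_E\le r$ on $E_r$ forces $\delta_E^\gamma\ge r^\gamma$ there, the left-hand side obeys
\[
\mathrm{LHS}^q \;\ge\; (1/2)^q\int_{E_r\cap B(x_0,R/2)} \delta_E^\gamma\,d\mu \;\ge\; c\, r^\gamma\, \mu\bigl(E_r\cap B(x_0,R/2)\bigr).
\]

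For the right-hand side, denote the inner singular integral by $g(y)$. Combining the Lipschitz bound $|f_R(y)-f_R(z)|\le\min(d(y,z)/R,1)$ with the Ahlfors $Q$-regularity, a standard coarea-type computation in the spirit of Ahlfors regular measures yields $g(y)^t\le C\,R^{-st}$ for $y\in B(x_0,2R)$ and $g(y)^t\le C\,R^Q d(x_0,y)^{-st-Q}$ for $d(x_0,y)>2R$. Using $\beta\ge 0$ and $\delta_E(y)\le d(y,x_0)$, the near-field contribution to $\int g^p\delta_E^\beta\,d\mu$ is at most $C R^{Q-sp+\beta}$, while the far-field contribution reduces to a constant times $R^{Qp/t}\int_{2R}^\infty \rho^{-sp-Qp/t+\beta+Q-1}\,d\rho$. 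The decisive observation is that the condition $\rho>0$ is precisely $q<Qp/(Q-sp+\beta)$; combined with the hypothesis $t\le q$ this forces $t(Q-sp+\beta)<Qp$, so the far-field integral converges and contributes the same order $R^{Q-sp+\beta}$. Thus $\mathrm{RHS}\le C R^{(Q-sp+\beta)/p}$.

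Inserting both bounds into the assumed inequality and raising to the $q$-th power gives
\[
\mu\bigl(E_r\cap B(x_0,R/2)\bigr) \;\le\; C\, r^{-\gamma} R^{q(Q-sp+\beta)/p} \;=\; C\, r^\rho R^{Q-\rho}.
\]
Since $\mu(B(x_0,R/2))\approx R^Q$ by $Q$-regularity, and since the range $r\ge R/2$ is handled by the trivial bound $\mu(E_r\cap B)/\mu(B)\le 1\le 2^\rho (r/R)^\rho$, a rescaling $R'=R/2$ yields $\mu(E_r\cap B(x_0,R'))/\mu(B(x_0,R'))\le C(r/R')^\rho$ for every $x_0\in E$ and all $0<r<R'<\infty$. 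By Remark~\ref{rmk:LT} this is the Aikawa condition at the exponent $\rho$, so $\lcodima(E)\ge\rho$; since $\rho>0$, the self-improvement Lemma~\ref{lemma:aikawa si} promotes this to $\lcodima(E)\ge\rho+\delta>\rho$ for some $\delta>0$, as required. The main obstacle is the careful RHS estimate, and specifically the convergence of the far-field tail of $g$; the fortunate coincidence that the nontriviality assumption $\rho>0$ is precisely the condition that keeps this tail integrable for every admissible $t\le q$ is what allows a single-scale cutoff to suffice, and without it a more local choice of test function or a restriction on $t$ would be forced.
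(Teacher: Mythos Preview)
Your approach mirrors the paper's almost exactly: the same cone-cutoff test function (up to normalization), the same near-field/far-field splitting of the inner singular integral, and the same observation that the tail converges precisely because $t\le q$ and $Q-(q/p)(Q-sp+\beta)>0$. The only substantive difference is on the left-hand side, and there you have introduced a small but genuine gap.

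You restrict the LHS integral to $E_r\cap B(x_0,R/2)$ and deduce the tubular-neighborhood bound $\mu(E_r\cap B(x_0,R'))/\mu(B(x_0,R'))\le C(r/R')^{\rho}$. This is the \emph{defining} inequality for $\lcodima(E)\ge\rho$, but it is \emph{not} the Aikawa condition~\eqref{eq:aikawa} at exponent $\rho$; Remark~\ref{rmk:LT} only asserts that the two notions yield the same supremum, not that they coincide at each fixed exponent. In fact the implication goes the wrong way for your purposes: the tubular bound at $\rho$ gives, via dyadic summation, the Aikawa integral condition only for exponents strictly below $\rho$ (when the bound is sharp the sum at $\rho$ diverges), so Lemma~\ref{lemma:aikawa si} cannot be invoked at $\rho$ as you do, and the strict inequality $\lcodima(E)>\rho$ does not follow. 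The fix is immediate and is exactly what the paper does: drop the auxiliary scale $r$ and the restriction to $E_r$, and estimate directly
\[
\int_{B(x_0,R/2)}\delta_E^{-\rho}\,d\mu\;\le\;2^q\int_X|f_R|^q\,\delta_E^{-\rho}\,d\mu\;\le\;C\,R^{(q/p)(Q-sp+\beta)}=C\,R^{Q-\rho}\le C\,R^{-\rho}\mu\bigl(B(x_0,R/2)\bigr),
\]
which \emph{is} the Aikawa condition at the exponent $\rho$; then Lemma~\ref{lemma:aikawa si} applies and yields $\lcodima(E)>\rho$. (A cosmetic remark: you use $\rho$ for both the target exponent and the far-field integration variable.)
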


\begin{proof}
We denote $\alpha = Q - \frac q p (Q-sp+\beta)$ and fix arbitrary $x\in E$ and $R>0$.
We may assume $\alpha > 0$, as otherwise there is nothing to prove.
In what follows, the varying constant $C >0$ may depend on $s$, $t$, $p$, $q$, $\beta$, $Q$ 
and the constants $C_A$ and $C_H$, but not on $x$ or $R$.

Let $f(y)=\max\{2R-d(x,y),0\}$ for each $y\in X$. Then $f\in\Lip_0(X)$
and inequality  \[\lvert f(y)-f(z)\rvert\leq \min\{d(y,z),2R\}\] holds whenever $y,z\in X$.
Thus, for any $y\in X$,
\begin{align*}
  \int_X \frac{\lvert f(y)-f(z)\rvert^t}{d(y,z)^{st} \mu(B(y, d(y,z)))}\, d\mu(z)
  &\leq
  \sum_{n\in \Z}
  \int_{2^nR \leq d(y,z) < 2^{n+1}R} \frac{\big(\min\{2^{n+1}R,2R\}\big)^t}{(2^nR)^{st} \mu(B(y, 2^n R))} \, d\mu(z) \\
  &\leq
  C \left( \sum_{n<0} R^{t-st} 2^{n(t-st)}  + \sum_{n\geq 0} R^{t-st} 2^{-nst} \right) \le C R^{t-st}.
\end{align*}
If however $y$ is far away from $x$, then this estimate can be improved. Indeed, suppose that $2^nR \leq d(x,y) < 2^{n+1}R$ for some $n\geq 2$.
Then $f(y)=0$, and if $f(z)\neq 0$ then $z \in B(x,2R)$, and so $d(y,z) \geq d(y,x) - d(x,z) \geq 2^nR - 2R \geq 2^{n-1}R$.
Consequently, 
\begin{align*}
  \int_X \frac{\lvert f(y)-f(z)\rvert^t}{d(y,z)^{st} \mu(B(y, d(y,z)))}\, d\mu(z)
  &\leq
  \int_{B(x,2R)} \frac{(2R)^t}{2^{(n-1)st}R^{st} \mu( B(y, 2^{n-1} R))} \, d\mu(z) \\
  &\leq C R^{t-st} 2^{-nst} \frac{\mu(B(x,2R))}{ \mu(B(y, 2^{n-1} R)) } \\
  &\leq C R^{t-st}2^{-n(st+Q)}.
\end{align*}

We claim that the Aikawa condition~\eqref{eq:aikawa} holds with the exponent $\alpha>0$. Indeed, since $\beta\ge 0$,
we obtain form the assumed fractional Hardy--Sobolev inequality that
\begin{align*}
  \int_{B(x,R)} \delta_E^{-\alpha}(y) \,d\mu(y)
  &\leq
  R^{-q} \int_X |f(y)|^q  \delta_E^{-\alpha}(y) \,d\mu(y)\\
  &\leq
  CR^{-q} \left[ \int_X \bigg(\int_{X}  \frac{\lvert f(y)-f(z)\rvert^t}{d(y,z)^{st} \mu(B(y,d(y,z)))}\,d\mu(z)\,\bigg)^{p/t} \delta_{E}^\beta(y)\,d\mu(y)  \right]^{q/p}\\
  &=
  CR^{-q} \left[ \left( \int_{B(x,4R)} + \sum_{n=2}^\infty \int_{2^nR \leq d(x,y) < 2^{n+1}R} \right) \bigg( \ldots \bigg)^{p/t} \delta_{E}^\beta(y)\,d\mu(y) \right]^{q/p}\\
&\leq
CR^{-q} \left[ \sum_{n=1}^\infty \mu(B(x,2^{n+1}R)) \bigg( R^{t-st}2^{-n(st+Q)} \bigg)^{p/t} (2^{n+1}R)^\beta  \right]^{q/p}\\
&\leq
C R^{-q+(q/p)(Q+p-sp+\beta)} \left[ \sum_{n=1}^\infty 2^{-n(sp+Qp/t -Q-\beta)}\right]^{q/p}
\\&\le  C R^{Q-\alpha} 
\leq C R^{-\alpha} \mu(B(x,R)),
\end{align*}
as claimed. To estimate the last series above, we used the inequality 
\[sp+Qp/t -Q-\beta \geq sp+Qp/q -Q-\beta = \alpha p/q > 0\,.\]

By Lemma~\ref{lemma:aikawa si}, there then exists some $\delta>0$ such that the Aikawa condition \eqref{eq:aikawa} holds also with the exponent $\alpha+\delta$. Thus, by Remark~\ref{rmk:LT}, $\lcodima(E) \geq \alpha + \delta > \alpha$.
\end{proof}

\begin{rem}\label{rem.optimality}
Notice that Proposition~\ref{p.weighted_Fract_optimality} also shows the sharpness of the assumption 
\[\lcodima(E) >  Q - \frac q p (Q-sp+\beta)\] in Theorem~\ref{t.riesz}. 

However, we do not know if the assumption $\lcodima(E)>\frac{\beta}{p-1}$ is optimal or even needed at all in Theorem~\ref{t.hardy_Fract}. For instance, such an extra condition is not needed in the corresponding ``thin case'' of the fractional Hardy inequalities (i.e.\ case $p=q$) considered in~\cite{MR3237044}, although there the functions are in addition assumed to vanish on $E$. Direct computations also indicate that no such condition is needed for fractional Hardy--Sobolev inequalities e.g.\ in the simple special case when $X=\R^{n-1}\times [0,\infty)$ and $E=\R^{n-1}\times\{0\}\subset\R^n$. On the other hand, we have no examples that would show the necessity of this assumption in the context of {\em fractional} Hardy--Sobolev inequalities. 

Nevertheless, in the following section we show that the corresponding assumption is indeed needed in the context of first order (i.e.\ non-fractional) Hardy--Sobolev inequalities, whence it is needed --- and in this generality also optimal --- in Theorem~\ref{t.riesz} as well; see Remark~\ref{rem.opt_HS}. Thus it seems that the bound $\lcodima(E)>\frac{\beta}{p-1}$ is in a way a built-in feature of the present approach using general $A_p$-weighted embeddings, and if one wants to get rid of this bound e.g.\ in the context of fractional Hardy--Sobolev inequalities, then a different approach needs to be used.
\end{rem}

\section{First order Hardy--Sobolev inequalities}\label{s.application}

{\em{Recall that $X$ is an unbounded metric space equipped with a doubling measure $\mu$.
The other standing assumptions from the beginning of \S\ref{s.weights}
are satisfied in this section due to the fact that the spaces considered here are necessarily  connected;  cf.\ Section \ref{s.metric}.}}

\medskip

Let us first review some basic facts concerning  upper gradients and  Poincar\'e inequalities in metric spaces. Let  $f\colon X\to \R$ be a measurable function. A Borel measurable function $g\ge 0$ on $X$
is an \emph{upper gradient} of $f$, if for all compact rectifiable curves $\gamma$ in $X$ we have
\[
  \lvert f(y)-f(x)\rvert \le \, \int_\gamma g\, ds\,.
\]
Here $x$ and $y$ are the two endpoints of $\gamma$, and the above condition should be interpreted 
as claiming that
$\int_\gamma g\, ds=\infty$ whenever at least one of $|f(x)|, |f(y)|$ is infinite. 
See e.g.~\cite{BB,HKST} for introduction on analysis on metric spaces
based on the notion of upper gradients. For instance, if $X=\R^n$
(with the Euclidean distance and the Lebesgue measure), then $g=|\nabla f|$ is
an upper gradient of a function $f\in \Lip(\R^n)$.

We say that the space $X$ supports a {\em $(1,1)$-Poincar\'e inequality} (or simply {\em Poincar\'e inequality}) 
if there exist constants 
$C_P>0$ and $\tau\ge 1$ such that whenever $B$ is a ball in $X$ and $g$ is an upper gradient of a measurable function 
$f\colon X\to \R$, we have
\begin{equation}\label{e.poincare}
    \vint_{B} |f-f_B|\, d\mu \le C_P\, \mathrm{rad}(B)\, \vint_{\tau B}g\, d\mu\,.
\end{equation}
Here the right-hand side of \eqref{e.poincare}
is interpreted as $\infty$, if the integral average
$f_B$ is not defined. 
If the space $X$ supports a Poincar\'e inequality, then $X$ is 
connected; see~\cite[Corollary 4.4]{BB}.
In particular, by Remark \ref{r.con_chain} such a space satisfies 
the
$\lambda$-chain condition given in Definition \ref{d.chain} for all $\lambda\ge 1$.

We are ready to state and prove (global)
weighted 
Hardy--Sobolev inequalities of the first order.
To the best of our knowledge, the case $q>p$ has not been considered
previously in the setting of general metric spaces. Corresponding results in $\R^n$ have been obtained in~\cite{LV}. See also the references in~\cite{LV} for some earlier results in $\R^n$ and~\cite{LHA} for results in the case $p=q$ in metric spaces. Nevertheless, Theorem~\ref{t.hardy} gives a partial improvement also to the Euclidean results of~\cite{LV}; see Remark~\ref{rem.impro}.

\begin{thm}\label{t.hardy} 
Assume that $X$  supports a $(1,1)$-Poincar\'e inequality, that 
the reverse doubling condition \eqref{reverse doubling} holds
with the exponent $\eta = 1$, and that there is $Q>1$ such that
$\mu(B)\ge c\rad(B)^Q$ for all balls $B\subset X$.

Let $\emptyset \neq E\subset X$ be a closed set, and let  
$1<p\le q\le Qp/(Q-p)<\infty$ and $\beta\in\R$ be such that 
\[
\lcodima(E) > \max \biggl\{ Q - \frac q p (Q-p+\beta) \, , \, \frac{\beta}{p-1}  \biggr\}\,.
\]
Then, there is a constant $C>0$ such that the weighted Hardy--Sobolev inequality 
\begin{equation}\label{e.weighted}
\biggl(\int_{X} \lvert f(x)\rvert^q \, \delta_E(x)^{(q/p)(Q-p+\beta)-Q}\,d\mu(x)\biggr)^{1/q}
\le C\biggl(\int_{X} g(x)^p\,\delta_E(x)^\beta\,d\mu(x)\biggr)^{1/p}
\end{equation}
holds whenever $f\in \Lip_0(X)$ and $g$ is an upper gradient of $f$.
\end{thm}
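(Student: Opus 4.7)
The plan is to reduce Theorem \ref{t.hardy} to Theorem \ref{t.riesz} via a pointwise estimate
\[
|f(x)| \leq C\, \mathcal{I}_1(g)(x), \qquad x \in X,
\]
valid for every $f \in \Lip_0(X)$ and every upper gradient $g$ of $f$. The hypotheses of Theorem \ref{t.hardy} are precisely those of Theorem \ref{t.riesz} with $s = 1$: the reverse doubling exponent is $\eta = 1$, the mass lower bound is assumed with the same $Q > 1$, and the dimensional condition on $E$ in~\eqref{e.riesz_assumption} reduces to the bound on $\lcodima(E)$ assumed here upon setting $s = 1$. Hence, once the pointwise bound is in hand, raising it to the $q$-th power, weighting by $\delta_E^{(q/p)(Q-p+\beta)-Q}$, integrating over $X$, and applying Theorem \ref{t.riesz} to the nonnegative function $g$ immediately yields~\eqref{e.weighted}.

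To prove the pointwise estimate, I would proceed as in the proof of Lemma~\ref{l.Mak}, with the fractional difference quotient replaced by a single application of the Poincar\'e inequality at each link. The $(1,1)$-Poincar\'e inequality forces $X$ to be connected (cf.\ \cite[Corollary~4.4]{BB}), so Remark~\ref{r.con_chain} supplies the $\lambda$-chain condition for every $\lambda \geq 1$; I apply it with $\lambda = \tau$, where $\tau \geq 1$ is the dilation from~\eqref{e.poincare}. Fix $x \in X$ with $f(x) \neq 0$, choose $R > 0$ so that $f \equiv 0$ outside $B(x, R)$, and pick $0 < r < R$ small enough that $|f(x) - f(y)| \leq |f(x)|/2$ on $B(x, r)$, which is possible by continuity of $f$. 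Applying the chain condition at $x$ with these $r$ and $R$ produces balls $B_0, B_1, \ldots, B_k$ satisfying (A)--(D) with $\lambda = \tau$, and the telescoping computation from the proof of Lemma~\ref{l.Mak} yields verbatim
\[
|f(x)| \leq C \sum_{i=0}^{k} \vint_{B_i} |f - f_{B_i}|\, d\mu.
\]

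Applying~\eqref{e.poincare} to each term gives $\vint_{B_i} |f - f_{B_i}|\, d\mu \leq C_P \rad(B_i)\vint_{\tau B_i} g\, d\mu$. For $y \in \tau B_i$, condition~(B) together with the triangle inequality yields $d(x, y) \sim \rad(B_i)$, so $\tau B_i \subset B(x, C d(x, y))$, and by doubling $\mu(B(x, d(x, y))) \sim \mu(\tau B_i) \sim \mu(B_i)$. Consequently,
\[
\rad(B_i)\vint_{\tau B_i} g\, d\mu \leq C \int_{\tau B_i} \frac{g(y)\, d(x, y)}{\mu(B(x, d(x, y)))}\, d\mu(y),
\]
and summing over $i$ while using the bounded overlap (D) of the balls $\tau B_i$ gives $|f(x)| \leq C\, \mathcal{I}_1(g)(x)$, as desired. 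The main obstacle in the whole argument is this chain-and-Poincar\'e bookkeeping (verifying $d(x,y)\sim \rad(B_i)$, the comparison $\mu(B(x,d(x,y)))\sim \mu(\tau B_i)$, and that the overlap from (D) collapses the sum into a single integral); all of the deeper analytic content, namely the $A_p$-properties of $\delta_E^{-\alpha}$ and the two-weight boundedness of $\mathcal{I}_1$, is already packaged in Theorem~\ref{t.riesz}, so no new hypothesis on $X$ or $E$ beyond those of that theorem with $s = 1$ is required.
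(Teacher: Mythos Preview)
Your proposal is correct and follows essentially the same route as the paper: reduce to Theorem~\ref{t.riesz} with $s=1$ via the pointwise bound $|f(x)|\le C\,\mathcal I_1(g)(x)$, obtained by telescoping along a $\tau$-chain, applying the $(1,1)$-Poincar\'e inequality on each link, and using the bounded overlap condition~(D). The only cosmetic difference is that the paper phrases the measure comparison step by invoking reverse doubling (mirroring the proof of Lemma~\ref{l.Mak}), whereas you observe directly that $d(x,y)\sim\rad(B_i)$ for $y\in\tau B_i$ and then use doubling alone; since the ratio $d(x,y)/\rad(B_i)$ is two-sidedly bounded here, the two arguments are equivalent.
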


\begin{proof}
We shall adapt the line of argument from the proof of Theorem~\ref{t.hardy_Fract} from Section~\ref{s.fractional}.
Fix  $f\in \Lip_0(X)$ and its upper gradient $g$.
By Theorem \ref{t.riesz}, it suffices to prove
that there is a constant $C>0$, independent of $f$ and $g$, for which
\begin{equation}\label{e.riez_pot}
|f(x)|\le  C\mathcal{I}_{1}(g)(x)
\end{equation}
whenever $x\in X$;
recall that $\mathcal{I}_1(g)$ is defined 
by \eqref{d.potential}.
We remark that inequality \eqref{e.riez_pot} is 
essentially available in the literature, see \cite[Remark 3.3]{MR2569546}, but we
provide below some details for the sake of completeness.
 
To prove inequality \eqref{e.riez_pot}, it suffices to consider a fixed $x\in X$
for which $f(x)\not=0$. Proceeding as in the
proof of Lemma \ref{l.Mak}, but applying the
$\tau$-chain condition with $\tau\ge 1$ as in
the assumed Poincar\'e inequality \eqref{e.poincare},
we obtain $M\ge 1$ and balls $B_0,B_1,B_2,\ldots,B_k$ from Definition \ref{d.chain}
such that
\[
\lvert f(x)\rvert \le C\sum_{i=0}^k \vint_{B_i} \lvert f(y)-f_{B_i}\rvert\,d\mu(y)\,.
\]
The Poincar\'e inequality~\eqref{e.poincare} then yields 
for each $i\in \{0,1,2,\ldots,k\}$
that
\[
\vint_{B_i} \lvert f(y)-f_{B_i}\rvert\,d\mu(y) \le \frac{C\,\mathrm{rad}(B_i)}{\mu(B_i)}\int_{\tau B_i} g(y)\,d\mu(y)\,.
\]
The reverse doubling condition~\eqref{reverse doubling} with $\eta=1$ can be invoked in a similar
way as in Lemma \ref{l.Mak}, with $\kappa=C(\tau,M)\ge 1$
and with $y\in \tau B_i$ instead of $B_i$. Thereby we obtain
\begin{align*}
\sum_{i=0}^{k}
\vint_{B_i}\vert f(y)-f_{B_i}\vert \,d\mu(y)
&\le C\sum_{i=0}^{k}\int_{\tau B_i}\frac{g(y)d(x,y)}{\mu(B(x,d(x,y)))}\,d\mu(y)\,.
\end{align*}
Finally, by condition (D) in Definition \ref{d.chain},
\begin{equation}
\begin{split}
\vert f(x)\vert &\le C \int_{X}\frac{g(y)d(x,y)}{\mu(B(x,d(x,y)))}\,d\mu(y)\,,
\end{split}
\end{equation}
from which inequality \eqref{e.riez_pot} follows.
\end{proof}

Recall that in the case $q = Qp/(Q-p)$, $\beta=0$, Theorem~\ref{t.hardy} yields the (global) Sobolev inequality, which is known to hold in a metric space $X$ also under weaker assumptions than those in Theorem~\ref{t.hardy}; see, for instance~\cite[Theorem~5.50]{BB}. Nevertheless, it seems that at least some version of the measure lower bound $\mu(B)\ge c\rad(B)^Q$ is needed
for the Sobolev inequality to hold. In~\cite[Theorem~5.50]{BB}, this bound is assumed for a sequence of balls $B_j$ with $\rad(B_j)\to \infty$ as $j\to\infty$. Hence such an assumption is natural in our results as well, in particular in Theorem~\ref{t.riesz} from which all our other inequalities follow.

\begin{rem}\label{rem.impro}
Theorem~\ref{t.hardy} gives even in the Euclidean space $\R^n$ a slight improvement to the known results concerning Hardy--Sobolev inequalities. Namely, 
in~\cite[Theorem~5.1]{LV} it was proved that 
if $E$ is a closed set in $\R^n$
with $n-1\le \udima(E) < n$, and if
$1\le p\le q\le np/(n-p)<\infty$
and 
\begin{equation}\label{eq.extra}
\beta\le\frac{(p-1)(qp+np-nq)}{qp+p-q}
\end{equation}
are such that
\[
\udima(E) < \frac{q}{p}(n-p+\beta)\,, 
\]
then there is a constant $C>0$ such that inequality 
\begin{equation}\label{e.eucl.weighted}
\biggl(\int_{\R^n} \lvert f(x)\rvert^q \, \delta_E(x)^{(q/p)(n-p+\beta)-n}\,dx\biggr)^{1/q}
\le C\biggl(\int_{\R^n} \lvert \nabla f(x)\rvert^p\,\delta_E(x)^\beta\,dx\biggr)^{1/p}
\end{equation}
holds for all $f\in C^\infty_0(\R^n)$. 
Using Theorem~\ref{t.hardy}, we can now 
improve the upper bound~\eqref{eq.extra} for $\beta$. 

More precisely, the dimensional assumptions of Theorem~\ref{t.hardy} 
are in $\R^n$ equivalent to the bounds
$\udima(E)< \frac{q}{p}(n-p+\beta)$ and 
$\udima(E)< n-\beta/(p-1)$.
The latter is equivalent to
\begin{equation}\label{e.upper}
\beta<(n-\udima(E))(p-1)\,,
\end{equation} which
is a better upper bound than~\eqref{eq.extra}
if
\begin{equation}\label{eq.dim_bound}
n-\udima(E)>\frac{qp+np-nq}{qp+p-q}.
\end{equation}

But now, if $\udima(E) < \frac{q}{p}(n-p+\beta)$ and~\eqref{eq.extra} holds,
then 
\[
\udima(E) <\frac{q}{p}(n-p+\beta)\le \frac{q}{p}\biggl(n-p+\frac{(p-1)(qp+np-nq)}{qp+p-q}\biggr)
\]
and so 
\[
n-\udima(E)> 
n-\frac{nqp-qp}{qp+p-q} 
=\frac{qp+np-nq}{qp+p-q}\,.
\]
Hence~\eqref{eq.dim_bound} holds whenever the assumptions of
the case $\udima(E)\ge n-1$ of~\cite[Theorem~5.1]{LV} are satisfied, and 
we can conclude that our Theorem~\ref{t.hardy} gives in all such cases a
better upper bound for the set of admissible $\beta$ in
these Euclidean Hardy--Sobolev inequalities~\eqref{e.eucl.weighted}.

On the other hand, in the case $\udima(E) < n-1$ and $\beta>0$, \cite[Theorem~5.1]{LV} is 
better than
the present Theorem~\ref{t.hardy}, because in this case there is no additional upper bound \eqref{e.upper} for $\beta$ in~\cite{LV}.
We do not know if also in the metric setting it could be possible to get rid of, or at least weaken, 
 the assumption $\lcodima(E)>\beta/(p-1)$ when $\lcodima(E)>1$. However,  as was already mentioned in Remark~\ref{rem.optimality},
 this bound seems to be intrinsic to the present approach using $A_p$-weights, and hence other tools need to be used if one wants to weaken or remove this bound; see also the following Remark~\ref{rem.opt_HS}. In the case $\udima(E) < n-1$ of the Euclidean result~\cite[Theorem~5.1]{LV}, 
such a tool is given by Euclidean isoperimetric inequalities.
\end{rem}

\begin{rem}\label{rem.opt_HS}
Finally, let us discuss the optimality of the bound 
\[
\lcodima(E) > \max \biggl\{ Q - \frac q p (Q-p+\beta) \, , \, \frac{\beta}{p-1}  \biggr\}
\]
in Theorem~\ref{t.hardy}. In $\R^n$, the first bound is equivalent to
$\udima(E) < \frac q p (n-p+\beta)$. This is certainly optimal, since for $\beta\ge 0$ this is even necessary for the Hardy--Sobolev inequality~\eqref{e.eucl.weighted} (when $\frac q p (n-p+\beta)\neq n$); we refer to~\cite[Theorem 6.1]{LV}. The second bound reads in
$\R^n$ as $\udima(E) < n - \frac{\beta}{p-1}$, or equivalently
$\beta<(n-\udima(E))(p-1)$. Now, given any $n-1\le \lambda < n$, it is possible to
construct an Ahlfors $\lambda$-regular set $E\subset\R^n$ (so that $\udima(E)=\lambda$)
such that the Hardy--Sobolev inequality~\eqref{e.eucl.weighted} fails
whenever 
\[
\beta > p-1 \quad \bigl( \ \ge (n-\udima(E))(p-1) \bigl )\,,
\]
and the $(p,\beta)$-Hardy inequality, i.e.\ case $q=p$ in~\eqref{e.eucl.weighted}, fails also for $\beta = p-1$.
In particular, for $\udima(E)=n-1$ the bound $\beta<(n-\udima(E))(p-1)$ is sharp, showing also the sharpness of the assumption
$\lcodima(E) > \frac{\beta}{p-1}$ in Theorem~\ref{t.riesz} (as was already pointed out in Remark~\ref{rem.optimality}).

Let us give more details in the planar case; similar constuctions can be made also in higher dimensions for $n-1\le \lambda <n$, but we omit the details.

Let $E_1=\partial ([0,1]^2)\subset \R^2$ be the boundary of the unit square. Consider functions
$f_j\in C_0^\infty([0,1]^2)$ such that $f_j(x)=1$ when $\delta_{E_1}(x)\ge  2^{1-j}$,
$f_j(x)=0$ when $\delta_{E_1}(x)\le 2^{-j}$, and $|\nabla f_j|\le C 2^j$
when $2^{-j}<\delta_{E_1}(x)< 2^{1-j}$. Then, for any $1 \le  p\le q\le 2p/(2-p)<\infty$
and $\beta\in\R$, the left-hand side of the Hardy--Sobolev 
inequality~\eqref{e.eucl.weighted} is uniformly bounded away from zero 
for these functions $f_j$ if $j>1$. On the other hand, it is easy to show that
when $\beta > p-1$, the right hand side of~\eqref{e.eucl.weighted} tends to zero as $j\to\infty$, and so the Hardy--Sobolev inequality fails for all $\beta> p-1 = (2-\udima(E_1))(p-1)$
(here $n=2$ and $\udima(E_1)=1$). Moreover, for $q=p$ and $\beta = p-1$, the right-hand side of~\eqref{e.eucl.weighted} remains bounded while the left-hand side tends to infinity.
This rather simple construction already yields the sharpness of the assumption $\beta<(n-\udima(E))(p-1)$, i.e.\ 
$\lcodima(E) > \frac{\beta}{p-1}$.

To obtain a $\lambda$-regular set $E_\lambda\subset\R^2$ such that the Hardy--Sobolev inequality fails for all $\beta>p-1$ with respect to this set, we can
replace the sides of the unit square with outward pointing copies of the $\lambda$-dimensional 
``antenna set'' $K\subset\R^2$. In the complex plane, the set $K$ can be described as the unique invariant set under the 
iterated function system of similitudes $F^\alpha=\{\varphi_1,\varphi_2,\varphi_3,\varphi_4\}$, where $0<\alpha<\frac 1 2$ and 
\[\begin{split}
\varphi_1(z) = & \tfrac 1 2 z\,, \qquad  \ \ \,  \varphi_3(z)=\alpha i z + \tfrac 1 2\,,\\
\varphi_2(z) = & \tfrac 1 2 z + \tfrac 1 2\,, \quad  
         \varphi_4(z)=-\alpha i z + \tfrac 1 2 + \alpha i\,.
\end{split}\]  
Since $K=\bigcup_{j=1}^4 \varphi_j(K)$ and $F^\alpha$ satisfies the open set condition, $K$ is
$\lambda$-regular, where $1<\lambda<2$ is the solution of the equation
$2\cdot 2^{-\lambda} +  2 \alpha^\lambda = 1$; see~\cite{BT} for more details on the antenna set.

Now, from the perspective of the above test functions $f_j$ having support inside the unit square $[0,1]^2$, the set $E_\lambda$
looks just like the set $E_1$, and hence the Hardy--Sobolev inequality~\eqref{e.eucl.weighted} fails also in this case whenever 
$\beta> p-1$, and the case $q=p$ fails also when $\beta=p-1$. For $1 < \lambda < 2$ we however do not obtain sharpness of the bound $\beta<(n-\udima(E))(p-1)$ since
here $n-\udima(E)=2-\lambda < 1$. We do not know if there exist sharp examples also for the case $\udima(E)>n-1$ (or $\lcodima(E)<1$ in metric space) or if the bound $\beta<p-1$ is always optimal in this case as well.
\end{rem}

\def\cprime{$'$}


\begin{thebibliography}{10}

\bibitem{Aikawa1991}
Aikawa, H.:
Quasiadditivity of {R}iesz capacity.
Math. Scand. 69, 15--30 (1991)

\bibitem{MR3215609}
Aimar, H., Carena, M., Dur{\'a}n R., Toschi, M.:
Powers of distances to lower dimensional sets as {M}uckenhoupt weights.
Acta Math. Hungar. 143, 119--137 (2014)

\bibitem{BB}
Bj\"orn, A., Bj\"orn, J.:
Nonlinear potential theory on metric spaces.
EMS Tracts in Mathematics 17. 
European Mathematical Society (EMS), Z\"urich (2011)

\bibitem{BHR}
Bonk, M., Heinonen, J., Rohde, S.:
Doubling conformal densities. 
J. Reine Angew. Math. 541, 117--141 (2001)  

\bibitem{BT}
Bishop, C.J., Tyson, J.T.:
Conformal dimension of the antenna set. 
Proc. Amer. Math. Soc. 129, 3631--3636 (2001) 

\bibitem{MR2606245}
Dur{\'a}n, R.G., L{\'o}pez~Garc{\'{\i}}a, F.: 
Solutions of the divergence and analysis of the {S}tokes equations in planar {H}\"older-{$\alpha$} domains.
Math. Models Methods Appl. Sci. 20, 95--120 (2010)

\bibitem{MR2910984}
Dyda, B., Frank, R.L.:
Fractional {H}ardy-{S}obolev-{M}az'ya inequality for domains.
Studia Math. 208, 151--166 (2012)

\bibitem{MR3237044}
Dyda, B., V{\"a}h{\"a}kangas, A.V.:
A framework for fractional {H}ardy inequalities.
Ann. Acad. Sci. Fenn. Math. 39, 675--689 (2014)
  
\bibitem{MR3148524}
Edmunds, D.E, Hurri-Syrj\"anen, R., V\"ah\"akangas, A.V.:
Fractional {H}ardy-type inequalities in domains with uniformly fat complement.
Proc. Amer. Math. Soc. 142, 897--907 (2014)

\bibitem{Fraser}
Fraser, J.M.:
Assouad type dimensions and homogeneity of fractals.
Trans. Amer. Math. Soc. 366, 6687-6733 (2014).  

\bibitem{garcia-cuerva} 
Garc\'\i a-Cuerva, J., Rubio de Francia, J.L.:
Weighted Norm Inequalities and Related Topics.
North-Holland, Amsterdam (1985)

\bibitem{geh}
Gehring, F.W.:  
The $L\sp{p}$-integrability of the partial derivatives of a quasiconformal mapping.
Acta Math. 130, 265--277 (1973)

\bibitem{HjK}
Haj\l asz, P., Koskela, P.:
Sobolev met Poincar\'e.
Mem. Amer. Math. Soc. 145, (2000)

\bibitem{HEI}
Heinonen, J.:
Lectures on analysis in metric spaces.
Universitext, Springer-Verlag, New York (2001)

\bibitem{HKM}
Heinonen, J., Kilpel\"ainen, T., Martio, O.:
Nonlinear potential theory of degenerate elliptic equations.
Oxford University Press, Oxford (1993) 

\bibitem{HKST}
Heinonen, J., Koskela, P., Shanmugalingam, N., Tyson, J.T.:
Sobolev spaces on metric measure spaces: An approach based on
upper gradients.
New Mathematical Monographs 27, Cambridge University Press (2015)


\bibitem{MR1021144}
Horiuchi, T.:
The imbedding theorems for weighted {S}obolev spaces.
J. Math. Kyoto Univ. 29, 365--403 (1989)

\bibitem{MR1118940}
Horiuchi, T.:
The imbedding theorems for weighted {S}obolev spaces. {II}.
Bull. Fac. Sci. Ibaraki Univ. Ser. A 23, 11--37 (1991)

\bibitem{MR3343059}
Hurri-Syrj{\"a}nen, R., V{\"a}h{\"a}kangas, A.V.:
Fractional {S}obolev-{P}oincar\'e and fractional {H}ardy inequalities in unbounded {J}ohn domains.
Mathematika 61, 385--401 (2015)

\bibitem{MR3277052}
Ihnatsyeva, L., Lehrb{\"a}ck, J., Tuominen, H., V{\"a}h{\"a}kangas, A.V.
Fractional {H}ardy inequalities and visibility of the boundary.
Studia Math. 224, 47--80 (2014)

\bibitem{KLV}
K\"aenm\"aki, A., Lehrb\"ack, J., Vuorinen, M.:
Dimensions, Whitney covers, and tubular neighborhoods.
Indiana Univ. Math. J. 62, 1861--1889 (2013)

\bibitem{MR2815740}
Korte, R., Kansanen, O.E.:
Strong $A_\infty$-weights are  $A_\infty$-weights on metric spaces.
Rev. Mat. Iberoam. 27, 335--354 (2011)

\bibitem{LHA}
Lehrb\"ack, J.:
Hardy inequalities and {A}ssouad dimensions.
J. Anal. Math. 131, 367--398 (2017)

\bibitem{MR3055588}
Lehrb{\"a}ck, J., Tuominen, H.:
A note on the dimensions of {A}ssouad and {A}ikawa.
J. Math. Soc. Japan, 65, 343--356 (2013)

\bibitem{LV}
Lehrb{\"a}ck, J., V{\"a}h{\"a}kangas, A.V.:
In Between the inequalities of Sobolev and Hardy. 
J. Funct. Anal. 271, 330--364 (2016)

\bibitem{MR1608518}
Luukkainen, J.:
Assouad dimension: antifractal metrization, porous sets, and
  homogeneous measures.
J. Korean Math. Soc. 35, 23--76 (1998)

\bibitem{MackayTyson2010}
Mackay, J.M., Tyson, J.T.:
Conformal dimension: Theory and application. 
University Lecture Series 54. American Mathematical Society, Providence, RI (2010)

\bibitem{MR2569546}
M{\"a}k{\"a}l{\"a}inen, T.:
Adams inequality on metric measure spaces.
Rev. Mat. Iberoam. 25, 533--558 (2009)


\bibitem{MW}
Muckenhoupt, B., Wheeden, R.:
Weighted norm inequalities for fractional integrals. 
Trans. Amer. Math. Soc. 192, 261--274 (1974)

\bibitem{MR1052009}
P{\'e}rez, C.:
Two weighted norm inequalities for {R}iesz potentials and uniform
 {$L\sp p$}-weighted {S}obolev inequalities.
Indiana Univ. Math. J. 39, 31--44 (1990)

\bibitem{MR1962949}
P{\'e}rez, C., Wheeden, R.:
Potential operators, maximal functions, and generalizations of $A_\infty$. 
Potential Anal. 19, 1--33 (2003)

\bibitem{StrombergTorchinsky}
Str\"omberg, J.-O., Torchinsky, A.: 
Weighted Hardy spaces. Lecture Notes in Mathematics, Vol. 1381. 
Springer-Verlag, Berlin (1989)

    
\bibitem{MR869816}
Torchinsky, A.:
Real-variable methods in harmonic analysis.
Pure and Applied Mathematics, Vol. 123.
Academic Press Inc., Orlando, FL (1986)

\bibitem{MR1246886}
V\"ais\"al\"a, J.:
Exhaustions of John domains.
Ann. Acad. Sci. Fenn. Ser. A I. Math.
19, 47--57 (1994)

\end{thebibliography}
\end{document}